\numberwithin{equation}{section}
\newcommand{\assign}{:=}
\newcommand{\mathd}{\mathrm{d}}
\newcommand{\tmem}[1]{{\em #1\/}}
\newcommand{\tmop}[1]{\ensuremath{\operatorname{#1}}}
\newcommand{\tmtextbf}[1]{{\bfseries{#1}}}
\newtheorem{lemma}{Lemma}
\newtheorem{theorem}{Theorem}
\newcommand{\normtrip}[1]{|\! |\!  | #1 | \! |\!  |}
\newcommand{\cn}{{\mathcal N}}
\newcommand{\ep}{\varepsilon}
\newcommand{\om}{\omega}
\newcommand{\vp}{\varphi}
\newcommand{\lp}{\left(}
\newcommand{\rp}{\right)}
\newtheorem{definition}[theorem]{Definition}
\newtheorem{remark}[theorem]{Remark}
\begin{document}

\title{
  One-dimensional Reflected Rough Differential Equations
}

\author{Aur\'elien Deya}
\address[A. Deya]{Institut Elie Cartan, University of Lorraine
B.P. 239, 54506 Vandoeuvre-l\`es-Nancy, Cedex
France}
\email{Aurelien.Deya@univ-lorraine.fr}

\author{Massimiliano Gubinelli}
\address[M. Gubinelli]{Hausdorff Center for Mathematics \& Institute for Applied Mathematics, University of Bonn,
Endenicher Allee 60,
53115 Bonn, Germany}
\email{gubinelli@iam.uni-bonn.de}

\author{Martina Hofmanov\'a}
\address[M. Hofmanov\'a]{Institute of Mathematics,  Technical University Berlin, Stra\ss e des 17. Juni 136, 10623 Berlin, Germany}
\email{hofmanov@math.tu-berlin.de}

\author{Samy Tindel}
\address[S. Tindel]{Department of Mathematics, Purdue University,
150 N. University Street,  West Lafayette, IN 47907, United States}
\email{stindel@purdue.edu}

\thanks{Financial support by the DFG via Research Unit FOR 2402 is gratefully acknowledged.}
\date{\today}

\begin{abstract}
We prove existence and uniqueness of the solution of a one-dimensional rough differential equation
  driven by a step-2 rough path and reflected at zero. In order to deal with the lack of control of the reflection measure the proof uses some ideas we introduced in a previous work dealing with rough kinetic PDEs~[\href{https://arxiv.org/abs/1604.00437}{arXiv:1604.00437}].
\end{abstract}

\maketitle

\section{Introduction}

In its original formulation~\cite{MR1654527}, Lyons' rough paths theory aimed at the study of the standard differential model 
\begin{equation}\label{std-rde}
dy_t=f(y_t) \, dx_t \quad , \quad y_0=a\in \mathbbm{R}^d \ , \ t\in [0,T] \ ,
\end{equation} 
where  $f:\mathbbm{R}^d \to \mathcal{L}(\mathbbm{R}^N,\mathbbm{R}^d)$ is a smooth enough application and $x,y:[0,T]\to \mathbbm{R}^m$ are (typically non-differentiable) continuous paths. In order to deal with the lack of regularity one has to drop both the classical differential or integral formulation of the problem and turn to a description of the motion on arbitrarily small, but finite scales. Eq.~\eqref{std-rde} can be interpreted as the requirement that increments of $y$ should behave locally as some ``germ'' given by a Taylor-like polynomial approximation of the right hand side.  A rough path  $\mathbbm{X}$ constructed above the irregular signal  $x$ is the given of the appropriate monomials with which such a local approximation is constructed. One of the key results of the rough path theory is that, under appropriate conditions, only one continuous function $y$ can satisfy all these local constraints. In this case we say that  the path $y$ satisfies the \emph{rough differential equations} (RDEs)~\eqref{std-rde}.

\smallskip

While the  approach of Lyons~\cite{MR1654527, lyons-book, MR2314753} stresses more the control theoretic sides of the theory, and in particular the mapping from rough paths over $x$ to rough path over $y$, it has been Davie~\cite{davie} who observed the usefulness of these local expansions. Following Davie's insight, one of the author of the present paper~\cite{controlling-rp} introduced a suitable Banach space where these local expansions can be studied efficiently. The work of Friz and Victoir~\cite{FV} showed also how to systematically generate and analyse the local expansions for~\eqref{std-rde} leading to a very complete theory for RDEs. 

It later turned out that these principles, or at least some adaptation of them, remain valid for other - less standard - differential models, such as delay \cite{rough-delay} or Volterra \cite{rough-volterra} rough equations and homogeneisation of fast/slow systems \cite{kelly}. The basic idea of local coherent expansions as effective description of rough dynamical systems has been developed more recently in numerous PDE settings (see e.g. \cite{paraprod,hairer-rough-pdes,MR3071506}, to mention but a few spin-offs amongst a flourishing literature) leading to the development of the general framework of regularity structures by Hairer \cite{regu-stru}, which allows to handle local expansions of a large class of distributions.  For a recent nice introduction to rough path theory and some applications see~\cite{friz_course_2014}.

\smallskip

This being said, in the vast majority of the situations so far covered by rough paths analysis, and especially in all the above quoted references, the success of the method lies in an essential way on fix-point and contraction mapping methods to establish existence and more importantly uniqueness of the object under consideration. 
Unfortunately, the existence of such a contraction property is not known in the case of the reflected rough differential equation, which we propose to study in this paper. To be more precise, we will focus on the one-dimensional RDE reflected at $0$, which can be described as follows: given a time $T > 0$, a smooth function $f :\mathbbm{R} \to \mathcal{L} (\mathbbm{R}^N ;\mathbbm{R})$ and a $p$-variation $N$-dimensional rough path $\mathbbm{X}$ with $2\leqslant p<3$ (see Definition \ref{defi-rough-path}), find an $\mathbbm{R}_{\geqslant 0}$-valued path $y\in V^p_1 ([0, T])$ and an $\mathbbm{R}_{\geqslant 0}$-valued increasing function (or \enquote{reflection measure}) $m\in  V^1_1 ([0, T])$ that together satisfy
\begin{equation}\label{rrde-informal}
\mathd y (t) = f (y (t)) \mathd \mathbbm{X}_t + \mathd m_t \ , \qquad y_t \mathd m_t=0 \ .
\end{equation}
Thus, the idea morally is to exhibit a path $y$ that somehow follows the dynamics in (\ref{std-rde}), but is also forced to stay positive thanks to the intervention of some regular \enquote{local time} $m$ at $0$. Of course, at this point, it is not exactly clear how to understand the right hand side of (\ref{rrde-informal}), and we shall later give a more specific interpretation of the system, based on rough paths principles (see Definition \ref{def:sol-rrde}).

\smallskip

The stochastic counterpart of (\ref{rrde-informal}), where $\mathbbm{X}$ is a standard $N$-dimensional Brownian motion and the right hand side is interpreted as an It{\^o} integral, has been receiving a lot of attention since the 60's (see e.g. \cite{lions-snitzman,saisho,skorohod,tanaka}), with several successive generalizations regarding the (possibly multidimensional) containment domain of $y$. This Brownian reflected equation has also been investigated more recently through the exhibition of Wong-Zakai-type approximation algorithms \cite{aida-sasaki}.

\smallskip

{When $1\leqslant p<2$, Problem~(\ref{rrde-informal}) can be naturally interpreted and analyzed by means of Young integration techniques. This situation was first considered by Ferrante and Rovira in \cite{rovira} for the $d$-dimensional positive domain $\mathbbm{R}_{\geq 0}^d$, with exhibition of an existence result therein. Using some sharp $p$-variation estimates for the Skorohod map, Falkowski and Slominski~\cite{falkowski-slominski, FS15} have recently provided a full treatment of the Young case (at least when considering reflection on hyperplanes), by proving both existence and uniqueness of the solution.}

\smallskip

{The  more complex rough (or step-2) version of (\ref{rrde-informal}), which somehow extends the Brownian model, has been first considered by Aida in \cite{aida_reflected_2015}, and further analysed by the same author in \cite{aida-2016} for more general multidimensional domains. Nevertheless, in these two references, only  {\it existence} of a solution to (\ref{rrde-informal}) can be established and the {\it uniqueness} issue is left open. The lack of regularity of the Skorohod map clearly appears as the main obstacle towards a uniqueness result in the approach followed in \cite{aida_reflected_2015,aida-2016}.}

\smallskip

Our aim in this study is to complete the above picture in the one-dimensional situation, that is to prove uniqueness of a solution to the problem (\ref{rrde-informal}). Actually, for the reader's convenience, we will also provide a detailed proof of the existence of a solution in this setting, and simplify at the same time some of the arguments used by Aida in \cite{aida_reflected_2015,aida-2016}. The subsequent analysis accordingly offers a thorough - and totally self-contained - proof of well-posedness of the problem (\ref{rrde-informal}). 

\smallskip

The strategy is inspired by the recent results on rough conservation laws \cite{conservation-laws}. Indeed, there is an analogy between \eqref{rrde-informal} and the kinetic formulation of conservation laws where the so-called kinetic measure appears. As for \eqref{rrde-informal}, this measure is unknown and becomes part of the solution which brings significant difficulties, especially in the proof of uniqueness. The latter is then based on a tensorization-type argument, also known as doubling of variables, and subsequent estimation of the difference of two solutions.

In the case of \eqref{rrde-informal}, we put forward a fairly simple proof of uniqueness based on a direct estimation of a difference of two solutions. In particular, in this finite dimensional setting no technical tensorization method is needed.
The existence is then derived from a compactness result, starting from a smooth approximation of the rough path $\mathbbm{X}$. In both cases, the key of the procedure consists in deriving sharp estimates for the remainder term which measures the difference between the (explicit) local expansion and the unknown of the problem. The strategy thus heavily relies on the so-called {\it sewing lemma} at the core of the rough paths machinery (see Lemma \ref{lemma-lambda}). The estimates on the remainder are then converted via a \emph{rough Gronwall lemma} (see Lemma \ref{lemma:basic-rough-gronwall}) into estimates for the unknown (resp. for some function thereof) in order to establish  existence (resp. uniqueness).

\smallskip

The paper is organized along a very simple division. In Section \ref{sec:setting}, we start with a few reminders on the rough paths setting and topologies, which allows us to give a rigorous interpretation of the problem~(\ref{rrde-informal}), as well as the statement of our well-posedness result (Theorem \ref{th:main-wellposedness}). We also introduce the two main technical ingredients of our analysis therein, namely the above-mentioned sewing and Gronwall lemmas, with statements borrowed from \cite{conservation-laws}. Section \ref{sec:uniqueness} is then devoted to the proof of uniqueness, while Section \ref{sec:existence} closes the study with the proof of existence. {Regarding the latter existence issue, we will first provide an exhaustive treatment of the problem in the one-dimensional situation (the main topic of the paper), and then give a few details on possible extensions of our arguments to more general multidimensional domains (Section \ref{subsec:general-existence}).}

\section{Setting and main result}\label{sec:setting}

To settle our analysis, we will need the following notations and definitions taken from rough paths theory. First of all, let us recall
the definition of the increment operator, denoted by $\delta$. If $g$ is a
path defined on $[0, T]$ and $s, t \in [0, T]$ then $\delta g_{st} \assign g_t
- g_s$, if $g$ is a $2$-index map defined on $[0, T]^2$ then $\delta g_{sut}
\assign g_{st} - g_{su} - g_{ut}$. For $g:[0,T]\to E$ and $\varphi: E\to F$ (with $E,F$ two Banach spaces), we will also use the convenient notations
\begin{equation}\label{eq:notation-brackets}
\llbracket \varphi \rrbracket (g)_{st}:=\int_0^1\mathd \tau \, \varphi(g_s+\tau (\delta g)_{st}) \quad , \quad \llbracket \llbracket \varphi \rrbracket \rrbracket (g)_{st}:=\int_0^1\mathd \tau\int_0^\tau \mathd \sigma \, \varphi(g_s+\sigma (\delta g)_{st}) \ .
\end{equation}
Observe in particular that if $\varphi$ is a smooth enough mapping, then
\begin{equation}\label{eq:brackets-phi}
\delta \varphi (g)_{st}=\llbracket \nabla \varphi \rrbracket (g)_{st} \delta g_{st} \quad \text{and} \quad \llbracket \varphi \rrbracket (g)_{st}-\varphi(g_s) =\llbracket \llbracket \nabla \varphi\rrbracket \rrbracket (g)_{st} \ .
\end{equation}

\smallskip

In the sequel, given an interval $I$ we call a {\tmem{control on $I$}} (and
denote it by $\omega$) any superadditive map on $\mathcal{S}_I \assign \{(s, t) \in
I^2 : s \leqslant t\}$, that is, any map $\omega : \mathcal{S}_I \to [0, \infty [$
such that,
\[ \omega (s, u) + \omega (u, t) \leqslant \omega (s, t), \qquad s \leqslant u
   \leqslant t. \]
We will say that a control $\omega$ is {\tmem{regular}} if $\lim_{|t - s| \to 0} \omega
(s, t) = 0$. Also, given a control $\omega$ on a time interval $I = [a, b]$, we
will use the notation $\omega (I) \assign \omega (a, b)$.

\smallskip

Now, given a time interval $I$, a parameter $p > 0$, a Banach space $E$ and a function $g : \mathcal{S}_I\to E$, we define the $p$-variation norm of $g$ as
\[ \| g \|_{\bar{V}^p_2 (I ; E)} \assign \sup_{(t_i) \in \mathcal{P} (I)}
   \left( \sum_i |g_{t_i t_{i + 1}} |^p \right)^{\frac{1}{p}}, \]
where $\mathcal{P} (I)$ denotes the set of all partitions of the interval $I$, and we denote by $\bar{V}^p_2 (I ; E)$ the set of maps $g:\mathcal{S}_I\to E$ for which this quantity is finite.
In this case,
\[ \omega_g (s, t) \assign \| g \|_{\bar{V}^p_2 ([s, t] ; E)}^p \]
defines a control on $I$, and we denote by $V^p_2 (I ; E)$ the set of elements
$g \in \bar{V}^p_2 (I ; E)$ for which $\omega_g$ is regular on $I$. We then denote
by $\bar{V}^p_1 (I ; E)$, resp. $V^p_1 (I ; E)$, the set of paths $g : I  \to E$ such that $\delta g\in \bar{V}^p_2 (I ; E)$, resp. $\delta g \in V^p_2 (I ; E)$.
Finally, we define the space $\bar{V}^p_{2, \text{loc}} (I ; E)$ of maps $g : \mathcal{S}_I \to E$ such that there exists a countable covering $\{I_k \}_k$ of
$I$ satisfying $g \in \bar{V}^p_2 (I_k ; E)$ for every $k$. We write $g \in V^p_{2, \text{loc}} (I ; E)$ if the related controls can be
chosen regular.

\begin{definition}\label{defi-rough-path}
Fix a time $T>0$ and let $N \geqslant 1$, $2\leqslant p<3$. Then we call a continuous $N$-dimensional $p$-variation rough path on $[0,T]$ any pair 
\begin{equation}\label{p-var-rp}
\mathbbm{X}=(\mathbbm{X}^1,\mathbbm{X}^2) \in V^p_2 ([0,T];\mathbbm{R}^{d}) \times V^{p/2}_2 ([0,T]; \mathbbm{R}^{d,d}) 
\end{equation}
that satisfies the relation 
\begin{equation}\label{chen-rela}
  \delta \mathbbm{X}^{2;ij}_{sut}=\mathbbm{X}^{1,i}_{su} \, \mathbbm{X}^{1,j}_{ut} \ , \qquad s<u<t\in [0,T] \ ,\ i,j\in \{1,\ldots,d\} \ .
\end{equation}
Such a rough path $\mathbbm{X}$ is said to be geometric if it can be obtained as the limit, for the $p$-variation topology involved in (\ref{p-var-rp}), of a sequence of smooth rough paths $(\mathbbm{X}^\ep)_{\ep>0}$, that is with $\mathbbm{X}^\ep=(\mathbbm{X}^{\ep,1},\mathbbm{X}^{\ep,2})$ explicitly defined as
$$\mathbbm{X}^{\ep,1,i}_{st}\assign \delta x^{\ep,i}_{st} \ , \qquad \mathbbm{X}^{\ep,2,ij}_{st}\assign \int_s^t \delta x^{\ep,i}_{su} \, \mathd x^{\ep,j}_u \ ,$$
for some smooth path $x^\ep:[0,T] \to \mathbbm{R}^N$. 
\end{definition}

\smallskip

We are now in a position to provide a clear interpretation of the problem (\ref{rrde-informal}).

\begin{definition}\label{def:sol-rrde}
Given a time $T > 0$, a real $a\geq 0$, a differentiable function $f :\mathbbm{R} \to \mathcal{L} (\mathbbm{R}^N ;\mathbbm{R})$ and a $p$-variation $N$-dimensional rough path $\mathbbm{X}$ with $2\leqslant p<3$, a pair $(y, m)\in  V^p_1 ([0, T] ; \mathbbm{R}_{\geqslant 0}) \times V^1_1 ([0, T] ;\mathbbm{R}_{\geqslant 0})$ is said to solve the problem (\ref{rrde-informal}) on $[0,T]$ with initial condition $a$ if there exists a 2-index map $y^\natural \in V^{p/3}_{2, \text{loc}}([0,T];\mathbbm{R})$ such that for all $s,t\in [0,T]$, we have
\begin{equation}
\left\lbrace
\arraycolsep=1pt\def\arraystretch{1.8}
  \begin{array}{c}
   \delta y_{s t} = f_i (y_s) \mathbbm{X}^{1,i}_{s t} + f_{2,ij} (y_s)
    \mathbbm{X}^{2,ij}_{s t} + \delta m_{s t} +y^\natural_{st} \\
    y_0=a \quad \text{and} \quad m_t=\int_0^t \mathbf{1}_{\{ y_u=0\}}  \mathd m_u 
  \end{array} \label{eq:rrde}
	\right. \ ,
\end{equation}
where we have set $f_{2,ij} (\xi) \assign f'_i (\xi)f_j (\xi)$ and $m([0,t]):=m_t$.
\end{definition}

\begin{remark}
Eq.~\eqref{eq:rrde} should be read as the given of a local expansion of the function $y$: it says that around each time point $s$ the function can be approximated by the germ 
$$
t \mapsto    y_s + f (y_s) \mathbbm{X}^1_{s t} + f_2 (y_s)
    \mathbbm{X}^2_{s t} + \delta m_{s t} 
$$
up to terms of order $\omega(s,t)^{p/3}$ where $\omega$ is a control. The term $ \delta m_{s t} $ is characteristic for this reflected problem: the measure $m$ increases only at times $u$ where $y_u=0$ effectively ``kicking'' the path $y$ away from the negative axis. In some sense it can be considered as a Lagrange multiplier enforcing the constraint $y_u \ge 0$ for all $u\in[0,T]$. 
\end{remark}

With this interpretation in hand, our well-posedness result reads as follows:
\begin{theorem} \label{th:main-wellposedness}
Let $T>0$ and $a>0$. If $f\in \mathcal{C}_b^3(\mathbbm{R};\mathcal{L}(\mathbbm{R}^N,\mathbbm{R}))$, that is if $f$ is $3$-time differentiable, bounded with bounded derivatives, and if $\mathbbm{X}$ is a continuous geometric $N$-dimensional $p$-variation rough path, then Problem~\eqref{rrde-informal} admits a unique solution $(y,m)$ on $[0,T]$ with initial condition $a$.
\end{theorem}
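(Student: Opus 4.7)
The plan is to tackle uniqueness and existence separately, both hinging on sharp estimates of the remainder $y^\natural$ obtained via the sewing lemma (Lemma \ref{lemma-lambda}) and converted into pathwise bounds via the rough Gronwall lemma (Lemma \ref{lemma:basic-rough-gronwall}). The overarching idea, following \cite{conservation-laws}, is to control an appropriate smooth function of the solution (or of the difference of two solutions) rather than trying to control the measure $m$ directly, because $m$ enjoys no rough-path regularity of its own.

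For \emph{uniqueness}, let $(y,m)$ and $(\tilde y, \tilde m)$ be two solutions with the same initial data and set $z := y - \tilde y$. The crucial Skorohod-type monotonicity is
\[
\int_0^t z_u \, \mathd(m-\tilde m)_u \;=\; -\int_0^t \tilde y_u\, \mathd m_u \,-\, \int_0^t y_u\, \mathd \tilde m_u \;\leq\; 0 ,
\]
since $y,\tilde y \geq 0$ and $m$ (resp.\ $\tilde m$) grows only where $y=0$ (resp.\ $\tilde y=0$). I would then apply a rough change-of-variable formula to $\varphi(z)=z^2$, writing $\delta(z^2)_{st} = 2z_s\,\delta z_{st} + (\delta z_{st})^2$ and substituting the local expansion \eqref{eq:rrde} for $\delta y_{st}$ and $\delta \tilde y_{st}$. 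The differences $f_i(y_s)-f_i(\tilde y_s)$ and $f_{2,ij}(y_s)-f_{2,ij}(\tilde y_s)$ are Lipschitz in $z_s$ via \eqref{eq:brackets-phi}, the cross term $2z_s\,\delta(m-\tilde m)_{st}$ is handled by the monotonicity above (up to a sewing-lemma remainder coming from replacing $z_s$ by $z_u$ under the integral), and the quadratic contribution $(\delta z_{st})^2$ is absorbed into a remainder of order $\omega(s,t)^{2/p}$ with $2/p>2/3$. Verifying the hypotheses of the sewing lemma on the resulting 2-index map yields a remainder $(z^2)^\natural \in V^{p/3}_{2,\mathrm{loc}}$, and the rough Gronwall lemma then forces $\|z\|_\infty = 0$; equality of the measures follows from \eqref{eq:rrde}.

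For \emph{existence}, I would use the geometric assumption and pick smooth approximations $x^\varepsilon$ of the driver. For each $\varepsilon$, the classical Skorohod problem admits a unique pair $(y^\varepsilon,m^\varepsilon)$ with $y^\varepsilon\geq 0$ and $y^\varepsilon_u\,\mathd m^\varepsilon_u = 0$. The key point is to derive $\varepsilon$-uniform a priori bounds on $y^\varepsilon$ in $V^p_1$ and on $m^\varepsilon$ in $V^1_1$. Writing the local expansion \eqref{eq:rrde} for $y^\varepsilon$, the complementarity condition $y^\varepsilon\,\mathd m^\varepsilon = 0$ is what allows one to estimate $\delta m^\varepsilon_{st}$ in terms of the other (rough-path) contributions: morally, one tests against $\varphi(y^\varepsilon)$ for a suitable $\varphi$ vanishing at $0$, which makes the contribution from $\mathd m^\varepsilon$ drop out while keeping an $L^\infty$ control of $y^\varepsilon$. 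Applying the sewing lemma to the corresponding remainder $y^{\varepsilon,\natural}$ and then the rough Gronwall lemma produces bounds depending only on $\|\mathbbm{X}\|_{p\text{-var}}$ and $\|f\|_{\mathcal{C}^3_b}$. Compactness of the embedding $V^p_1 \hookrightarrow V^{p'}_1$ for $p<p'<3$ together with Helly's selection principle for the monotone $m^\varepsilon$ extracts a subsequential limit $(y,m)$; passing to the limit in \eqref{eq:rrde} (using the continuity of $\mathbbm{X} \mapsto \mathbbm{X}^\varepsilon$ in $p$-variation) and in $\int_0^t y^\varepsilon \,\mathd m^\varepsilon = 0$ (via uniform convergence of $y^\varepsilon$ and weak-$*$ convergence of $\mathd m^\varepsilon$) gives a solution in the sense of Definition \ref{def:sol-rrde}.

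The hard part will be the uniqueness step, precisely because the measure $m-\tilde m$ admits no a priori rough-path estimate: any naive estimate of $\delta z_{st}$ picks up a term $\delta(m-\tilde m)_{st}$ that is only of $V^1$-regularity and cannot be closed by contraction. The resolution is to never estimate $z$ directly but rather $z^2$, so that the only way $\mathd(m-\tilde m)$ enters is through the sign-definite bracket $\int z\,\mathd(m-\tilde m)\leq 0$ and through a genuinely higher-order remainder of order $\omega(s,t)^{p/3}$ compatible with the sewing lemma. Making this splitting rigorous --- and in particular verifying that the non-sign-definite correction terms created by replacing $z_s$ with $z_u$ along the partition still satisfy the super-additive bound required by Lemma \ref{lemma-lambda} --- is the crux of the argument, and the same mechanism is what also underlies the $\varepsilon$-uniform estimates needed in the existence part.
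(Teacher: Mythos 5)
Your overall two-step plan (sewing + rough Gronwall for both existence and uniqueness) and your existence argument are in the same spirit as the paper's; the paper makes the \enquote{test $y^\varepsilon$ against a function vanishing at $0$} idea concrete via a quantitative Skorohod estimate $\delta m_{st}\leq 8\lVert g\rVert_{0,[s,t]}$ (Lemma~\ref{lem:skorohod-bound}), then a bootstrap on small intervals followed by weak convergence of measures; your outline is consistent with this. The genuine gap is in the uniqueness step, and it comes from your choice of test function.

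You propose $\varphi(\xi)=\xi^2$, i.e.\ controlling $g:=z^2$ with $z=y-\tilde y$. But there is a degree mismatch that prevents the rough Gronwall lemma from closing. Indeed, the non-rough correction created by replacing $z_s$ with $z_u$ under the $\mathd(m-\tilde m)$ integral is
\[
2\int_s^t(z_s-z_u)\,\mathd(m-\tilde m)_u \;=\;-2\int_s^t\delta z_{su}\,\mathd(m-\tilde m)_u,
\]
which is only $O(\lVert z\rVert_\infty\,\omega_M(s,t))$ with $\omega_M$ the total variation of $\mu+\nu$ (and $|\delta z_{su}|\lesssim\lVert z\rVert_\infty$ is the best one can say; it cannot be improved to $\lVert z\rVert_\infty^2$). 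Equivalently, in the paper's bound \eqref{preliminary-est-h-nat} the factor $\normtrip{\varphi}$ is $\sim\lVert z\rVert_\infty$ for $\varphi=\xi^2$, while $\omega_\star$ contains a piece $\omega_M(s,t)^{p/3}\omega_{\mathbbm{X}}(s,t)^{1/3}$ of degree zero in $z$; their product gives a remainder term of degree \emph{one} in $\lVert z\rVert_\infty$. Plugging $g=z^2$ into Lemma~\ref{lemma:basic-rough-gronwall} then yields at best $\lVert z\rVert_\infty^2\leq C\lVert z\rVert_\infty\,\omega_M(0,T)\,\omega_{\mathbbm{X}}(0,T)^{1/p}$, i.e.\ a finite bound on $\lVert z\rVert_\infty$, not $z\equiv 0$. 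Moreover the qualitative sign $\int z\,\mathd(m-\tilde m)\leq 0$ discards a nonpositive term but provides nothing coercive in $\omega_M$ to absorb this linear-in-$z$ remainder.

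This is precisely why the paper instead regularizes $\varphi(\xi)=|\xi|$ by $\varphi_\varepsilon(\xi)=\sqrt{\varepsilon^2+\xi^2}$: then $\normtrip{\varphi_\varepsilon}\leq 3$ uniformly, so all remainder bounds are \emph{degree-matched} with $\Phi=|y-z|$, and the explicit computation
\[
\int_s^t \operatorname{sign}(y_u-z_u)\,\mathd(\mu_u-\nu_u)=-\omega_M(s,t)+\int_s^t\mathbf{1}_{\{y_u=z_u\}}\mathd(\mu_u+\nu_u)
\]
supplies a \emph{coercive} $-\omega_M(s,t)$ on the left-hand side which swallows the degree-zero remainder $\omega_M\,\omega_{\mathbbm{X}}^{1/p}$. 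Even then the Gronwall estimate does not directly give $y\equiv z$: it leaves the residual $\int_s^t\mathbf{1}_{\{y_u=z_u\}}\mathd(\mu_u+\nu_u)$, and the paper finishes with a connectivity argument on a maximal interval where $y\neq z$ but $y_s=z_s$ at the left endpoint. Your proposal omits both the coercivity mechanism and this final step, and I do not see how to recover them from the $\xi^2$ test function.
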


Let us conclude this preliminary section with a presentation of the two main technical results that will be used in our analysis, and the proofs of which are elementary and can be found e.g. in \cite{conservation-laws}. 

\begin{lemma}[Sewing lemma]
  \label{lemma-lambda}Fix an interval $I$, a Banach space $E$ and a parameter
  $\zeta > 1$. Consider a map $G : I^3 \to E$ such that $ G \in \mathrm{Im}
  \hspace{0.17em} \delta$ and for every $s < u < t \in I$,
  \[ |G_{s u t} | \leqslant \omega (s, t)^{\zeta}, \]
  for some regular control $\omega$ on $I$. Then there exists a unique element
  $\Lambda G \in V_2^{1 / \zeta} (I ; E)$ such that $\delta (\Lambda G) = G$
  and for every $s < t \in I$,
  \begin{equation}
    | (\Lambda G)_{s t} | \leqslant C_{\zeta} \omega (s, t)^{\zeta}
    \label{contraction},
  \end{equation}
  for some universal constant $C_{\zeta}$.
\end{lemma}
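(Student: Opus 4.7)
The plan is to establish uniqueness by a direct telescoping argument and existence via the classical Young--Lyons point-removal scheme applied to the Riemann sums of a primitive of $G$.

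For uniqueness, assume both $\Lambda^1 G, \Lambda^2 G$ satisfy the conclusion, and let $r := \Lambda^1 G - \Lambda^2 G$; then $\delta r = 0$ and $|r_{st}| \le 2 C_\zeta \omega(s,t)^\zeta$. The relation $\delta r = 0$ forces $r$ to telescope along any partition $s = t_0 < \cdots < t_n = t$ of $[s,t]$: $r_{st} = \sum_i r_{t_i t_{i+1}}$. Combining the pointwise bound with superadditivity and $\zeta > 1$,
\[
|r_{st}| \le 2 C_\zeta \sum_i \omega(t_i, t_{i+1})^\zeta \le 2 C_\zeta \Bigl( \max_i \omega(t_i, t_{i+1}) \Bigr)^{\zeta - 1} \omega(s,t),
\]
and the right-hand side vanishes as the mesh shrinks by regularity of $\omega$, so $r \equiv 0$.

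For existence, I would exploit $G \in \mathrm{Im}\, \delta$ to pick some $h : \mathcal{S}_I \to E$ with $\delta h = G$, and for each partition $\pi = \{s = t_0 < \cdots < t_n = t\}$ of $[s,t]$ consider the Riemann-type sum $J_\pi := \sum_i h_{t_i t_{i+1}}$. Superadditivity of $\omega$ guarantees an interior index $j$ with $\omega(t_{j-1}, t_{j+1}) \le \tfrac{2}{n-1}\, \omega(s,t)$; removing that point yields a partition $\pi'$ with $J_\pi - J_{\pi'} = -G_{t_{j-1} t_j t_{j+1}}$, whose size is controlled by $\bigl(\tfrac{2}{n-1}\bigr)^\zeta \omega(s,t)^\zeta$. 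Iterating down to the trivial partition and summing the errors yields the single-partition bound $|J_\pi - h_{st}| \le C_\zeta\, \omega(s,t)^\zeta$ with $C_\zeta := 2^\zeta \sum_{k \ge 1} k^{-\zeta}$, the series converging precisely because $\zeta > 1$.

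The same point-removal estimate, now applied subinterval-by-subinterval to compare any partition with its common refinement with another, shows that $\{J_\pi\}_\pi$ is Cauchy as the mesh tends to zero: the error is bounded by $C_\zeta (\max_i \omega(t_i,t_{i+1}))^{\zeta-1} \omega(s,t)$, which vanishes by regularity of $\omega$. Denoting the limit by $\phi_t - \phi_s$ for some path $\phi : I \to E$, I set $\Lambda G := h - \delta \phi$; nilpotency $\delta \circ \delta = 0$ gives $\delta(\Lambda G) = \delta h = G$, the single-partition bound passes to the limit to yield $|(\Lambda G)_{st}| \le C_\zeta \omega(s,t)^\zeta$, and superadditivity immediately upgrades this to $\Lambda G \in V^{1/\zeta}_2(I; E)$. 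The main obstacle is the combinatorial point-removal estimate, which hinges on the convergence of $\sum_{k \ge 1} k^{-\zeta}$; this is precisely why the hypothesis $\zeta > 1$ is indispensable and cannot be relaxed.
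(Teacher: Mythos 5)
The paper does not prove this lemma itself (it refers the reader to \cite{conservation-laws} for the proof), so there is no paper proof to compare against line by line; your proof, however, is correct and follows the standard point-removal (``Young's trick'') argument, which is exactly the route taken in the cited reference. The only step you gloss over is why the limit $L_{st}=\lim_\pi J_\pi$ of the Riemann sums is of the form $\delta\phi$: one needs to check $\delta L=0$, which follows immediately by splitting any partition of $[s,t]$ at an intermediate point $u$ so that $J_\pi^{[s,t]}=J_{\pi_1}^{[s,u]}+J_{\pi_2}^{[u,t]}$ and passing to the limit; with that remark added the argument is complete.
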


\begin{lemma}[Rough Gronwall Lemma]
  \label{lemma:basic-rough-gronwall}Fix a time horizon $T > 0$ and let $g :
  [0, T] \to [0, \infty)$ be a path such that for some constants $C, L > 0$,
  $\kappa \geqslant 1$ and some controls $\omega_1, \omega_2$ on $[0, T]$ with $\omega_1$ being regular, one has
  \begin{equation}
    \label{eq:a-priori-g} \delta g_{st} \leqslant C (\sup_{0 \leqslant r
    \leqslant t} g_r)  \hspace{0.17em} \omega_1 (s, t)^{\frac{1}{\kappa}} +
    \omega_2 (s, t),
  \end{equation}
  for every $s < t \in [0, T]$ satisfying $\omega_1 (s, t) \leqslant L$. Then
  it holds
\begin{equation*}
\sup_{0 \leqslant t \leqslant T} g_t \leqslant 
2 e^{c_{L,\kappa}\, \omega_1 (0, T)} 
\left\{ g_0 + \sup_{0 \leqslant t \leqslant T} (\omega_2 (0, t) 
e^{- c_{L,\kappa}\,   \omega_1 (0, t) )} \right\},
\end{equation*}
 where $c_{L,\kappa}$ is defined as
  \begin{equation}
    \label{eq:def-c-kappa-L} 
    c_{L,\kappa}  = \sup \left( \frac{1}{L}, \, (2 Ce^2)^{\kappa} \right) .
  \end{equation}

\end{lemma}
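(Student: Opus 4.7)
The plan is to partition $[0,T]$ into finitely many sub-intervals on each of which hypothesis \eqref{eq:a-priori-g} produces a genuine contraction for the running supremum of $g$, and then iterate along the partition after introducing an exponential tilt by $e^{-c_{L,\kappa}\omega_1(0,\cdot)}$.

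Set $c := c_{L,\kappa}$ and $a := 1/c$. By the very definition of $c_{L,\kappa}$, this choice guarantees simultaneously $a \leq L$ (so \eqref{eq:a-priori-g} is applicable on any sub-interval $[s,t]$ with $\omega_1(s,t)\leq a$) and $Ca^{1/\kappa} \leq 1/(2e^2)$. Using the regularity of $\omega_1$, one can construct a partition $0=\tau_0<\tau_1<\cdots<\tau_N=T$ with $\omega_1(\tau_n,\tau_{n+1})\leq a$ on each piece; super-additivity of $\omega_1$ forces $N\leq 1+c\,\omega_1(0,T)$. Writing $G(u):=\sup_{0\leq r\leq u}g_r$ and applying \eqref{eq:a-priori-g} with $(s,t)=(\tau_n,t)$ for $t\in[\tau_n,\tau_{n+1}]$ gives $g_t\leq G(\tau_n) + \tfrac{1}{2e^2}G(\tau_{n+1}) + \omega_2(\tau_n,\tau_{n+1})$, and taking the supremum in $t$ leads to the local recursion
\begin{equation*}
G(\tau_{n+1}) \leq \beta\,\bigl[G(\tau_n) + \omega_2(\tau_n,\tau_{n+1})\bigr], \qquad \beta := \frac{1}{1-1/(2e^2)} = \frac{2e^2}{2e^2-1}.
\end{equation*}

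Introduce the tilted quantity $\tilde G_n := e^{-c\,\omega_1(0,\tau_n)}\, G(\tau_n)$. Super-additivity of $\omega_1$ gives $\omega_1(0,\tau_{n+1})-\omega_1(0,\tau_n) \geq \omega_1(\tau_n,\tau_{n+1})$, so multiplying the local recursion by $e^{-c\,\omega_1(0,\tau_{n+1})}$ produces
\begin{equation*}
\tilde G_{n+1} \leq \bigl(\beta\, e^{-c\,\omega_1(\tau_n,\tau_{n+1})}\bigr)\,\tilde G_n + \beta\, e^{-c\,\omega_1(0,\tau_{n+1})}\,\omega_2(\tau_n,\tau_{n+1}).
\end{equation*}
The crucial observation is that the definition of $c_{L,\kappa}$ is calibrated precisely so that $\beta < e$: hence on every ``full'' sub-interval (where $\omega_1(\tau_n,\tau_{n+1}) = a$ and $ca = 1$) the multiplier $\beta/e$ is \emph{strictly} less than $1$, turning the tilted recursion into a genuine geometric contraction rather than a merely non-expansive one. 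This is what prevents an otherwise linearly-in-$N$ growing factor from spoiling the final estimate.

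Each driving term $e^{-c\,\omega_1(0,\tau_{n+1})}\omega_2(\tau_n,\tau_{n+1})$ is bounded by $M:=\sup_{0\leq t\leq T}\omega_2(0,t)e^{-c\,\omega_1(0,t)}$, thanks to super-additivity and non-negativity of $\omega_2$ (which yield $\omega_2(\tau_n,\tau_{n+1})\leq \omega_2(0,\tau_{n+1})$). Summing the resulting geometric series then delivers $\tilde G_N \leq g_0 + \dfrac{\beta}{1-\beta/e}\,M$, and multiplying back by $e^{c\,\omega_1(0,T)}$ gives the announced bound, since a direct computation shows $\dfrac{\beta}{1-\beta/e} = \dfrac{2e^2}{2e^2-2e-1} < 2$. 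The main residual issue concerns the final sub-interval $[\tau_{N-1},T]$: if $\omega_1(\tau_{N-1},T)<a$ then the last contraction factor may degenerate from $\beta/e$ to $\beta$, contributing a bounded additional factor that is readily absorbed into the overall constant $2$ by a straightforward case-analysis on whether $\omega_1(0,T)<a$ (direct application of \eqref{eq:a-priori-g}) or $\omega_1(0,T)\geq a$ (iteration as above, with the final short interval handled separately).
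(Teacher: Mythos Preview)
The paper does not actually prove this lemma: it states the result and refers the reader to \cite{conservation-laws} for the (elementary) argument. So there is no ``paper's own proof'' to compare against here.

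Your strategy---greedy partition of $[0,T]$ into sub-intervals of $\omega_1$-size at most $a=1/c_{L,\kappa}$, local contraction $G(\tau_{n+1})\leq\beta\,[G(\tau_n)+\omega_2(\tau_n,\tau_{n+1})]$, then iteration with an exponential tilt---is exactly the standard one used in the reference, and the main computations (in particular $Ca^{1/\kappa}\le 1/(2e^2)$, $\beta/e<1$, and $\beta/(1-\beta/e)=2e^2/(2e^2-2e-1)<2$) are correct.

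Two loose ends are worth tightening. First, your partition argument implicitly assumes that on each ``full'' cell one has $\omega_1(\tau_n,\tau_{n+1})=a$ exactly; this uses continuity of $t\mapsto\omega_1(\tau_n,t)$, whereas only regularity ($\omega_1(s,t)\to 0$ as $|t-s|\to 0$) is assumed. The usual workaround is to take the greedy partition so that $\omega_1(\tau_n,t)\le a$ for $t<\tau_{n+1}$ while $\omega_1(\tau_n,\tau_{n+1})\ge a$ by maximality, and run the recursion on $[\tau_n,t)$ before letting $t\uparrow\tau_{n+1}$; this needs a sentence of justification. Second, and more substantively, your handling of the last (short) interval is too quick: if $\omega_1(\tau_{N-1},T)<a$ then the final multiplier is only $\le\beta$, and plugging this into your geometric sum gives a coefficient $\beta+\beta^2/(1-\beta/e)\approx 2.97$ in front of $M$, not $2$. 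The claim that this is ``readily absorbed into the overall constant $2$'' is therefore not correct as written; obtaining the precise constant $2$ with the precise $c_{L,\kappa}$ of the statement requires a slightly more careful bookkeeping of the last step (or a different organization of the sum over $\omega_2$-increments). The overall exponential-in-$\omega_1(0,T)$ shape of the bound is, however, unaffected.
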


%

\section{Uniqueness}\label{sec:uniqueness}

\begin{theorem}\label{thm:uniqueness}
Let $T>0$ and $a>0$. If $f\in \mathcal{C}_b^3(\mathbbm{R};\mathcal{L}(\mathbbm{R}^N,\mathbbm{R}))$ and $\mathbbm{X}$ is an $N$-dimensional $p$-variation rough path, then Problem~\eqref{rrde-informal} admits at most one solution $(y,m)$ on $[0,T]$ with initial condition $a$.
\end{theorem}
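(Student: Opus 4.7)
The plan is to test the difference $z:=y-\tilde y$ of two solutions against itself and apply the rough Gronwall lemma to $g_t:=z_t^2$. Subtracting the two instances of \eqref{eq:rrde} and using the $\mathcal{C}^1$ Taylor expansions $f_i(y_s)-f_i(\tilde y_s)=z_s\,\phi_i(s)$ and $f_{2,ij}(y_s)-f_{2,ij}(\tilde y_s)=z_s\,\phi_{ij}(s)$ with bounded $\phi$'s, one obtains
\[
\delta z_{st} \;=\; z_s\,\Xi_{st} + (\delta m-\delta\tilde m)_{st} + (y^\natural-\tilde y^\natural)_{st},
\qquad \Xi_{st}:=\phi_i(s)\mathbb{X}^{1,i}_{st}+\phi_{ij}(s)\mathbb{X}^{2,ij}_{st},
\]
with $|\Xi_{st}|\lesssim \omega_{\mathbb{X}}(s,t)^{1/p}$ for a regular control $\omega_{\mathbb{X}}$.

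First I would compute $\delta(z^2)_{st}=2z_s\,\delta z_{st}+(\delta z_{st})^2$, so that
\[
\delta g_{st} \;=\; 2z_s^2\,\Xi_{st} \;+\; 2z_s(\delta m-\delta\tilde m)_{st} \;+\; 2z_s(y^\natural-\tilde y^\natural)_{st} \;+\; (\delta z_{st})^2.
\]
The crucial step is to rewrite the reflection contribution using the integral form $\delta m_{st}=\int_s^t dm_u$ and the support constraint $y\,dm=0$, $\tilde y\,d\tilde m=0$:
\[
2z_s(\delta m-\delta\tilde m)_{st} \;=\; \underbrace{2\!\int_s^t\! z_u\,(dm-d\tilde m)_u}_{=\,-2\int_s^t(\tilde y_u\,dm_u+y_u\,d\tilde m_u)\,\leq\,0} \;+\; 2\!\int_s^t(z_s-z_u)(dm-d\tilde m)_u.
\]
The first integral is non-positive because $y,\tilde y\geq 0$ and $m,\tilde m$ are non-decreasing, and so may be discarded; the second is bounded by $\|\delta z\|_{\infty,[s,t]}\,\big(\delta m_{st}+\delta\tilde m_{st}\big)$, which is of order $\omega(s,t)^{1+1/p}$ once one uses the a priori $p$-variation bound on $\delta z$.

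The remaining pieces $2z_s(y^\natural-\tilde y^\natural)_{st}$ and $(\delta z_{st})^2$ are treated by the sewing lemma: each $y^\natural$-type term is of order $\omega(s,t)^{3/p}$ with $3/p>1$, and the cross-square $(\delta z_{st})^2$ splits into genuinely higher-order pieces plus a term involving $(\delta m+\delta\tilde m)^2$, which is still a control super-linear in $\delta m$. Factorising the multiplier $z_s$ using $|z_s|\leq (\sup_{r\leq t}g_r)^{1/2}$ and applying Young's inequality $2|z_s|\cdot R \leq \sup_r g_r\cdot\omega(s,t)^{1/p}+R^2\omega(s,t)^{-1/p}$ converts everything into the form required by Lemma~\ref{lemma:basic-rough-gronwall}:
\[
\delta g_{st} \;\leq\; C\Big(\sup_{r\leq t}g_r\Big)\omega_1(s,t)^{1/p}\;+\;\omega_2(s,t)
\]
on intervals with $\omega_1(s,t)\leq L$, with $\omega_2$ a regular control arising entirely from the sewing-type higher-order remainders. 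Since $g_0=z_0^2=0$ and the sewing-lemma remainders are tight on vanishing intervals, a standard localisation argument makes $\omega_2$ vanish in the Gronwall conclusion, forcing $z\equiv 0$ and hence also $m\equiv\tilde m$.

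The main obstacle is the irregularity of the reflection measure: since $\delta m_{st}$ is only $1$-variation, it cannot be placed among the higher-order sewing remainders, and no contraction-type argument on the Skorohod map is available. The whole proof pivots on the sign trick in the reflection step, where the constraint $y\,dm=0$ turns the a priori dangerous term $2z_s(\delta m-\delta\tilde m)_{st}$ into a non-positive principal part plus a genuinely sewing-compatible error.
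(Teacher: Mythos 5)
Your plan shares the overall architecture of the paper's proof (write a local expansion for a functional of $y-\tilde y$, estimate the remainder by the sewing lemma, close with the rough Gronwall lemma), but the specific choice of test function, namely $g_t:=(y_t-\tilde y_t)^2$ rather than $|y_t-\tilde y_t|$, produces a genuine gap at the end.

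The central difficulty is the structure of the control $\omega_2$ that survives in the Gronwall conclusion. Your decomposition of the measure contribution,
$$
2z_s(\delta m-\delta\tilde m)_{st}=\underbrace{-2\int_s^t(\tilde y_u\,\mathd m_u+y_u\,\mathd\tilde m_u)}_{\leqslant 0}+2\int_s^t(z_s-z_u)(\mathd m-\mathd\tilde m)_u,
$$
is correct, and the sign of the first term is indeed the right idea. But after discarding that term you are left with an error of order $\omega_\Delta(s,t)^{1/p}\omega_M(s,t)$ (and further contributions from the Young-inequality step and from $(\delta z_{st})^2$, e.g.\ $\omega_M(s,t)^2$). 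None of these controls vanish on a time interval $[s,t]$ where $y_s=\tilde y_s$ but $y\neq\tilde y$ on $(s,t)$. So the rough Gronwall lemma only gives $\sup_{[s,t]}g \leqslant C\,\omega_2(s,t)$ with $\omega_2(s,t)>0$, which produces no contradiction. The clause \emph{``a standard localisation argument makes $\omega_2$ vanish in the Gronwall conclusion''} is precisely the missing step: there is no such standard argument for your $\omega_2$, because $\omega_2$ inherits $\omega_M$ and $\omega_\Delta$ in bulk, not merely their restriction to the contact set $\{y=\tilde y\}$.

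The paper circumvents this by testing against $\varphi(\xi)=|\xi|$ (via the smooth approximation $\varphi_\varepsilon(\xi)=\sqrt{\varepsilon^2+\xi^2}$, chosen so that $\normtrip{\varphi_\varepsilon}$ stays uniformly bounded). The payoff is twofold. First, the measure contribution becomes the \emph{exact identity}
$$
\int_s^t\operatorname{sign}(y_u-\tilde y_u)\,(\mathd\mu_u-\mathd\nu_u)=-\omega_M(s,t)+\int_s^t\mathbf{1}_{\{y_u=\tilde y_u\}}\,(\mathd\mu_u+\mathd\nu_u),
$$
so the negative part is quantified as $-\omega_M(s,t)$ and can be moved to the left-hand side to absorb the $\omega_M$-dependent pieces of the remainder on small intervals. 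In your $(y-\tilde y)^2$ version the negative part $-2\int(\tilde y\,\mathd m+y\,\mathd\tilde m)$ is discarded without a lower bound on its magnitude, so you lose exactly the mechanism that absorbs those errors. Second, and decisively, the surviving $\omega_2(s,t)=\int_s^t\mathbf{1}_{\{y=\tilde y\}}(\mathd\mu+\mathd\nu)$ is concentrated on the contact set: on an interval $[s,t]$ with $y_s=\tilde y_s$ and $y\neq\tilde y$ on $(s,t)$ it vanishes identically, and combined with $|y_s-\tilde y_s|=0$ this forces $\sup_{[s,t]}|y-\tilde y|+\omega_M(s,t)\leqslant 0$, the contradiction that finishes the proof. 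Your $(y-\tilde y)^2$ choice cannot produce an $\omega_2$ with this contact-set concentration, because the discretisation error $\int_s^t(z_s-z_u)(\mathd m-\mathd\tilde m)_u$ is supported on the full interval.

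A secondary point: the paper needs an intermediate Step~3 which closes the loop by bounding $\omega_\Delta$ and $\omega_{\Delta,\natural}$ on small intervals in terms of $\|y-\tilde y\|_\infty$ and $\omega_M$ before the final Gronwall application. Your plan omits this, but that is a fixable bookkeeping issue; the core obstruction is the choice of $(\cdot)^2$ over $|\cdot|$ described above.
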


\begin{proof}
  Let $(y, \mu)$ and $(z, \nu)$ be two solutions for~(\ref{eq:rrde}).
 Set $Y \assign (y, z) \in V^p_1
  ([0, T] ; \mathbbm{R}^2)$ and with decomposition (\ref{eq:rrde}) in mind, write 
  \begin{equation}
    \delta Y_{s t} = F_i (Y_s) \mathbbm{X}^{1,i}_{s t} + F_{2,ij} (Y_s) \mathbbm{X}^{2,ij}_{s
    t} + \delta M_{st}+ Y^{\natural}_{s t}  \ , \qquad 0 \leqslant s \leqslant t
    \leqslant T. \label{eq:paired-rrde}
  \end{equation}
where we use the shorthands $F_i (Y) \assign (f_i (y), f_i (z))$, $F_{2,ij} (Y) \assign (f_{2,ij} (y), f_{2,ij} (z))$, $M \assign (\mu, \nu)\in V^1_1
  ([0, T] ; \mathbbm{R}^2)$ and $Y^\natural\assign (y^\natural,z^\natural)\in V^{p/3}_{2,\text{loc}}([0, T] ; \mathbbm{R}^2)$. From now non and until the end of the proof, we fix an interval $I\subset [0,T]$ such that $Y^\natural\in V^{p/3}_{2}(I; \mathbbm{R}^2)$ and consider the following controls on $I$:
$$\omega_Y (s, t)\assign \| Y \|_{\bar{V}^p_1 ([s,t] )}^p \ , \qquad  \omega_{Y,\natural} (s, t) \assign \| Y^\natural \|_{\bar{V}^{p/3}_2 ([s,t] )}^{p/3} \ , $$
$$ \omega_\Delta (s, t)\assign \| y-z \|_{\bar{V}^p_1 ([s,t] )}^p\ , \qquad \omega_{\Delta, \natural} (s, t)\assign \| y^\natural-z^\natural \|_{\bar{V}^{p/3}_2 ([s,t] )}^{p/3} \ ,$$
$$ \omega_M (s, t)\assign \| M \|_{\bar{V}^1_1 ([s,t] )}=\| \mu \|_{\bar{V}^1_1 ([s,t] )} +\| \nu\|_{\bar{V}^1_1 ([s,t] )}\ .$$
Without loss of generality, we will assume that $\omega_{\mathbbm{X}}(I)\leqslant 1$, where $\omega_{\mathbbm{X}}$ is a fixed control such that
  \[ | \mathbbm{X}^1_{s, t} | + | \mathbbm{X}^2_{s, t} |^{1 / 2}
     \leqslant \omega_{\mathbbm{X}} (s, t)^{1 / p}, \quad 0 \leqslant s
     \leqslant t \leqslant T \ . \]
		
\smallskip
		
\noindent		
Now, consider a smooth function $\varphi : \mathbbm{R} 
  \rightarrow \mathbbm{R}_{\geqslant 0}$ and set $h (x_1, x_2) \assign \varphi (x_1
  - x_2)$ for all $x_1,x_2 \in \mathbbm{R}$. A direct computation via Taylor expansion, combined with (\ref{eq:paired-rrde}), shows that 
  \begin{equation}
    \delta h (Y)_{s t} = \llbracket \nabla h\rrbracket (Y)_{st}\, \delta Y_{st}=H_i (Y_s) \mathbbm{X}^{1,i}_{s t} + H_{2,ij} (Y_s)
    \mathbbm{X}^{2,ij}_{s t}  + \int_s^t \varphi' (y_u - z_u) (\mathd \mu_u - \mathd \nu_u)+ h^{\natural}_{s t}
    \label{eq:h-dyn}
  \end{equation}
  where $h^{\natural}$ is a map in $V^{p/3}_{2}(I; \mathbbm{R})$, and where we have set, for all $Y=(y,z)\in \mathbbm{R}^2$,
\begin{eqnarray}
H_i (Y) &\assign& \nabla h (Y)F_i(Y)= \varphi' (y - z) (f_i (y) - f_i (z)) \label{eq:def-H}\\
H_{2,ij} (Y) &\assign& \nabla H_i (Y) F_j (Y) \notag\\
&=&\varphi' (y - z) (f_{2,ij} (y) - f_{2,ij} (z)) 
+\varphi'' (y - z) (f_i (y) - f_i (z))(f_j (y) - f_j (z)) \ . \label{eq:def-H-2}
\end{eqnarray}

	\
  
  {\noindent}\tmtextbf{Step 1: A general estimate on $h^{\natural}$.} Given that $h^{\natural}$ is a remainder term, we wish to use the sewing map to
   estimate it. Applying $\delta$ to
  eq.~(\ref{eq:h-dyn}) and using (\ref{chen-rela}), we get, for $0 \leqslant s \leqslant
    u \leqslant t \leqslant T$:
  \begin{eqnarray}
    \delta h^{\natural}_{s u t} &=& 
     \delta H_i (Y)_{su} \mathbbm{X}^{1,i}_{ut}
    - H_{2,ij}(Y_s) \mathbbm{X}^{1,j}_{s u} \mathbbm{X}^{1,i}_{u t} 
    + \delta H_{2,ij} (Y)_{s u}\mathbbm{X}^{2,ij}_{u t} \notag \\ 
    &=& 
    \lp  \delta H_i (Y)_{su}  - H_{2,ij}(Y_s) \mathbbm{X}^{1,j}_{s u} \rp \mathbbm{X}^{1,i}_{ut}
    + \delta H_{2,ij} (Y)_{s u}\mathbbm{X}^{2,ij}_{u t}\label{eq:sewing-one}  .
			\end{eqnarray}
  We need to expand the quantity $\delta H_i (Y)_{su}  - H_{2,ij}(Y_s) \mathbbm{X}^{1,j}_{s u}$ in \eqref{eq:sewing-one}, in order to show that $h^{\natural}$ is suitably small
  and depends in a very precise way on $\varphi$ and on the difference
  $\Delta \assign y - z$. In fact, by Taylor expansion and using~(\ref{eq:paired-rrde}) we get
  \begin{eqnarray*}
\lefteqn{\delta H_i (Y)_{s u} - H_{2,ij} (Y_s) \mathbbm{X}^{1,j}_{s u}
	\ =\ \llbracket \nabla H_i\rrbracket
    (Y)_{s u} \delta Y_{s u} - H_{2,ij} (Y_s) \mathbbm{X}^{1,j}_{s u}}\\
  & = &\llbracket \nabla H_i\rrbracket (Y)_{s u} F_j (Y_s) \mathbbm{X}^{1,j}_{s u} +
     \llbracket \nabla H_i\rrbracket (Y)_{s u} F_{2,jk} (Y_s) \mathbbm{X}^{2,jk}_{s u} +
     \llbracket \nabla H_i\rrbracket (Y)_{s u} Y^{\natural}_{s u}\\
		& &+ \llbracket
     \nabla H_i\rrbracket (Y)_{s u} \delta M_{s u}- H_{2,ij} (Y_s) \mathbbm{X}^{1,j}_{s u} \\
& = &  (\llbracket \nabla H_i\rrbracket (Y)_{s u} - \nabla H_i (Y_s)) F_j (Y_s)
     \mathbbm{X}^{1,j}_{s u} + \llbracket \nabla H_i \rrbracket (Y)_{s u} F_{2,jk} (Y_s)
     \mathbbm{X}^{2,jk}_{s u}\\
		& &+ \llbracket \nabla H_i \rrbracket(Y)_{s u}
     Y^{\natural}_{s u} + \llbracket \nabla H_i \rrbracket(Y)_{s u} \delta M_{s u} \, ,
	\end{eqnarray*}
  since $H_{2,ij} (Y) = \nabla H_i (Y) F_j (Y)$. Plugging this identity back into equation~(\ref{eq:sewing-one})  and neglecting to write down explicitly the time indexes, we end up with:
  \begin{eqnarray}\label{eq:sewing-2}
\delta h^{\natural}
		& = & (\llbracket \nabla H_i\rrbracket (Y)  - \nabla H_i
    (Y)) F_j (Y) \mathbbm{X}^{1,j} \mathbbm{X}^{1,i} + \llbracket \nabla H_i \rrbracket(Y)
     F_{2,jk} (Y) \mathbbm{X}^{2,jk} \mathbbm{X}^{1,i}\nonumber \\
		& &\hspace{2cm}+ \llbracket \nabla H_i \rrbracket(Y)
     Y^{\natural} \mathbbm{X}^{1,i} + \llbracket \nabla H_i\rrbracket (Y)\,  \delta M \, \mathbbm{X}^{1,i} + \delta H_{2,ij} (Y)
    \mathbbm{X}^{2,ij} .
  \end{eqnarray}
Using elementary algebraic manipulations, as well as the relation $H_{2,ij} (Y) = \nabla H_i (Y) F_j (Y)$, we obtain: 
  \begin{eqnarray*}
\lefteqn{  (\llbracket \nabla H_i\rrbracket (Y)_{s u}  - \nabla H_i (Y_s)) F_j (Y_s)}\\  
  &=&  (\llbracket H_{2,ij}\rrbracket (Y)_{s u}  - H_{2,ij} (Y_s)) + (\llbracket \nabla H_i\rrbracket (Y)_{s u}
    F_j (Y_s) - \llbracket H_{2,ij} \rrbracket(Y)_{s u}) \\
    & = & \llbracket \llbracket \nabla H_{2,ij}\rrbracket \rrbracket (Y)_{s u}  \delta Y_{s u}
    - \llbracket \nabla H_i (\cdot) \llbracket \nabla F_j (\cdot) \rrbracket \rrbracket(Y)_{s u}
    \delta Y_{s u} \ ,
  \end{eqnarray*}
where the identity $\llbracket H_{2,ij}\rrbracket (Y)_{s u}  - H_{2,ij} (Y_s)=\llbracket \llbracket \nabla H_{2,ij}\rrbracket \rrbracket (Y)_{s u}  \delta Y_{s u}$ directly stems from \eqref{eq:brackets-phi}, and where we define:
  \[ \llbracket \nabla H_i (\cdot) \llbracket \nabla F_j (\cdot) \rrbracket \rrbracket(Y)_{s
     u} \assign \int_0^1 \nabla H_i (Y_s + \tau \delta Y_{s u}) \int_0^{\tau}
     \nabla F_j (Y_s + \sigma \delta Y_{s u}) \, \mathd \sigma \mathd \tau\ . 
  \]
Therefore,  we can rewrite eq.~(\ref{eq:sewing-2}) as 
  \begin{eqnarray}
\delta h^{\natural}
    & = & \llbracket \llbracket \nabla H_{2,ij}  \rrbracket \rrbracket(Y) \delta
    Y\,\mathbbm{X}^{1,j} \mathbbm{X}^{1,i} - \llbracket \nabla H_i (\cdot) \llbracket \nabla F_j
    (\cdot) \rrbracket \rrbracket(Y) \delta Y\, \mathbbm{X}^{1,j} \mathbbm{X}^{1,i} \nonumber \\
		 &  &\hspace{2cm} + \llbracket\nabla H_i  \rrbracket(Y) F_{2,jk} (Y) \mathbbm{X}^{2,jk} \mathbbm{X}^{1,i} + \llbracket \nabla H_i \rrbracket(Y) Y^{\natural} \mathbbm{X}^{1,i}\nonumber \\
		&  &\hspace{3cm} +\llbracket \nabla H_i \rrbracket(Y) \delta M\, \mathbbm{X}^{1,i} + \llbracket \nabla H_{2,ij}
    \rrbracket(Y) \delta Y\, \mathbbm{X}^{2,ij} \ .\label{expan-del-h-nat}
  \end{eqnarray}
 In order to further evaluate the rhs of this relation in terms of the test function $\vp$, let us write
  explicit expressions for the gradients $\nabla H_i (Y)$ and $\nabla H_{2,ij} (Y)$
  computed at $(a, b) \in \mathbbm{R}^2$:
  \[ \nabla H_i (Y)  (a, b) = \varphi'' (y - z) (f_i (y) - f_i (z)) (a - b) +
     \varphi' (y - z) (f_i' (y) - f_i' (z)) a \]
  \[ + \varphi' (y - z) f_i' (z) (a - b) \]
  and
  \[ \nabla H_{2,ij} (Y) (a, b) = \varphi'' (y - z) (f_{2,ij} (y) - f_{2,ij} (z)) (a - b) +
     \varphi' (y - z) (f_{2,ij}' (y) a - f_{2,ij}' (z) b) \]
  \[ + \varphi''' (y - z) (f_i (y) - f_i (z))(f_j (y) - f_j (z)) (a - b)\]
	\[+ 2
     \varphi'' (y - z) (f_i (y) - f_i (z)) (f_j' (y) a - f_j' (z) b)\ . \]
At this point, consider the quantity
\begin{equation}\label{defi:norm-trip}
\normtrip{\varphi} \assign \sup_{y, z \in \mathbbm{R}} (| \varphi' (y - z) | + | y - z
     | | \varphi'' (y - z) | + | y - z |^2 | \varphi''' (y - z) |) \ .
		\end{equation}
  Then, denoting by $C_f$ any quantity that only depends on $f$, we have for all $1\leq i,j,k\leq N$ and $s<t\in I$,
\begin{eqnarray*}
 | \llbracket \nabla H_i\rrbracket (Y)_{st}  | + | \llbracket \nabla
     H_{2,ij}\rrbracket (Y)_{st}  | &\leqslant& C_{f}\, \normtrip{\varphi} \\
 | \llbracket \nabla H_i \rrbracket(Y)_{st}  F_{2,jk} (Y_s) | &\leqslant &
     C_{f}\, \normtrip{\varphi}\, \| y - z \|_{\infty;[s,t]} \\
	 | \llbracket \nabla H_i (\cdot) \llbracket \nabla F_{j} (\cdot) \rrbracket
     \rrbracket(Y)_{s t} \delta Y_{s t} | &\leqslant&  C_{f}\, \normtrip{\varphi}\,
     (\omega_{\Delta} (s, t)^{1 / p} + \| y - z \|_{\infty;[s,t]} \omega_Y (s, t)^{1
     / p}) \\
  | \llbracket \llbracket \nabla H_{2,ij}  \rrbracket \rrbracket(Y)_{s t} \delta
     Y_{s t} | &\leqslant & C_{f}\, \normtrip{\varphi}\, (\omega_{\Delta} (s, t)^{1 / p} + \|
     y - z \|_{\infty;[s,t]} \omega_Y (s, t)^{1 / p}) \\
 | \llbracket \nabla H_i  \rrbracket(Y)_{s t} Y^{\natural}_{s t} | &\leqslant &  C_{f}\, \normtrip{\varphi}\, (\omega_{\Delta, \natural} (s, t)^{3 / p} + \| y - z
     \|_{\infty;[s,t]} \omega_{Y, \natural} (s, t)^{3 / p})\ . 
			\end{eqnarray*}
Going back to (\ref{expan-del-h-nat}), we get that for all $s<u<t\in I$,
  \[ | \delta h^{\natural}_{s u t} | \leqslant  C_{f}\, \normtrip{\varphi}\,  \big[ \omega_{\star} (s, t) +\omega_{\mathbbm{X}} (s, t)^{2 / 3} \omega_{\Delta} (s, t)^{1/3}+ \omega_{\mathbbm{X}} (s, t)^{1 / 3} \omega_{\Delta,
     \natural} (s, t)\big]^{3 / p} \]
  where $\omega_{\star}$ is the control on $I$ given for every $s< t \in I$ by
$$
	\omega_{\star} (s, t) \assign \omega_M (s, t)^{p / 3}
     \omega_{\mathbbm{X}} (s, t)^{1 / 3}\\
			+\| y - z\|_{\infty;[s,t]}^{p/3}\omega_{\mathbbm{X},Y}(s,t) \ ,
$$
with
		$$\omega_{\mathbbm{X},Y}(s,t)\assign  \omega_{\mathbbm{X}} (s, t) +\omega_Y (s, t)^{1/3} \omega_{\mathbbm{X}} (s, t)^{2
     / 3} +
     \omega_{Y, \natural} (s, t) \ .$$
 We are therefore in a position to apply the sewing lemma and conclude that for all $s<t\in I$, 
  \begin{equation}\label{preliminary-est-h-nat}
	| h^{\natural}_{st} | \leqslant  C_{f,p}\, \normtrip{\varphi}\,  \big[ \omega_{\star} (s, t) +\omega_{\mathbbm{X}} (s, t)^{2 / 3} \omega_{\Delta} (s, t)^{1/3}+ \omega_{\mathbbm{X}} (s, t)^{1 / 3} \omega_{\Delta,\natural} (s, t)\big]^{3 / p} 
		\end{equation}
  for some quantity $C_{f,p}$ that only depends on $f$ and $p$.

 	\
  
  {\noindent}\tmtextbf{Step 2: A first application.} Our aim now is to apply the previous bound to the non-smooth function $\varphi(\xi)=\varphi_0 (\xi) \assign
  | \xi |$. To this end, we will rely on the smooth approximation
  $\varphi_{\varepsilon}$ defined for $\varepsilon > 0$ as
  $\varphi_{\varepsilon} (\xi) = \sqrt{\varepsilon^2 + | \xi |^2}$ for all
  $\xi \in \mathbbm{R}$. Let us denote the associated objects with
  $h_{\varepsilon}, h_{\varepsilon}^{\natural}, H_{\varepsilon,i},
  H_{\varepsilon, 2,ij}$,.... In this case
  \[ | \varphi_{\varepsilon}' (\xi) | \leqslant 1\ , \qquad |
     \varphi_{\varepsilon}'' (\xi) | \leqslant 1 / \sqrt{\varepsilon^2 + | \xi
     |^2} \ , \qquad | \varphi_{\varepsilon}''' (\xi) | \leqslant 3 /
     (\varepsilon^2 + | \xi |^2) \]
and so, with the notation (\ref{defi:norm-trip}), we have the uniform estimate $\normtrip{\varphi_{\varepsilon}} \leqslant 3$. 
Plugging this estimate into \eqref{preliminary-est-h-nat}, we get:
\begin{equation}\label{preliminary-est-h-nat-2}
	| h^{\natural}_{\varepsilon,st} | \leqslant  
	C_{f,p}\,  \big[ \omega_{\star} (s, t) 
	+\omega_{\mathbbm{X}} (s, t)^{2 / 3} \omega_{\Delta} (s, t)^{1/3}
	+ \omega_{\mathbbm{X}} (s, t)^{1 / 3} \omega_{\Delta,\natural} (s, t)\big]^{3 / p}.
		\end{equation}
Furthermore, explicit elementary computations show that
\begin{equation}\label{eq:lim-phi-eps}
\lim_{\ep\to 0} \vp_{\ep} = |\cdot|,
\qquad
\lim_{\ep\to 0} \vp'_{\ep} = \text{sign},
\quad\text{and}\quad
\lim_{\ep\to 0} \vp''_{\ep} = \delta_{0},
\end{equation}
where the first two limits are simple limits of functions, and the last one is understood in the weak sense. Notice that we also use the convention $\text{sign}(0)=0$ above.

With those preliminaries in mind, let us take limits in \eqref{eq:h-dyn}. To begin with,
as $\ep \to 0$, a standard dominated convergence argument and relation \eqref{eq:lim-phi-eps} yield:
\begin{equation}\label{a1}
\int_s^t \varphi'_{\varepsilon} (y_u - z_u) \mathd (\mu_u - \nu_u) \to \int_s^t \text{sign} (y_u - z_u) \mathd (\mu_u - \nu_u).
\end{equation}
In addition, owing to the fact that $y_t\geqslant 0$, $z_t\geqslant 0$, we have
\begin{eqnarray*}
\lefteqn{\int_s^t \text{sign} (y_u - z_u) \mathd (\mu_u - \nu_u)}\\
&=& \int_s^t \mathbf{1}_{\{y_u>z_u \geqslant 0\}}  d\mu_u -\int_s^t \mathbf{1}_{\{z_u>y_u \geqslant 0\}}  d\mu_u-\int_s^t \mathbf{1}_{\{y_u>z_u \geqslant 0\}} d\nu_u+\int_s^t \mathbf{1}_{\{z_u>y_u \geqslant 0\}}  d\nu_u.
\end{eqnarray*}
Hence, using the conditions $\mu_t =\int_0^t \mathbf{1}_{\{y_u=0\}}\mathd \mu_u $, $\nu_t =\int_0^t \mathbf{1}_{\{z_u=0\}}\mathd \nu_u $, we end up with:
\begin{eqnarray}\label{a2}
\lefteqn{\int_s^t \text{sign} (y_u - z_u) \mathd (\mu_u - \nu_u)
=-\Big[\int_s^t \mathbf{1}_{\{z_u>y_u \geqslant 0\}}  d\mu_u+\int_s^t \mathbf{1}_{\{y_u>z_u \geqslant 0\}} d\nu_u \Big]} 
\notag\\
&=& -\Big[\int_s^t \mathbf{1}_{\{z_u>y_u \geqslant 0\}}  d(\mu_u+\nu_u)+\int_s^t \mathbf{1}_{\{y_u>z_u \geqslant 0\}} d(\mu_u+\nu_u) \Big]
\notag\\
&=& - \int_s^t \mathbf{1}_{\{y_u \neq z_u\}} \mathd (\mu_u + \nu_u)\ = \ - \omega_M (s, t) + \int_s^t \mathbf{1}_{\{y_u = z_u\}} \mathd (\mu_u +\nu_u) \ .
\end{eqnarray}
Recall that $H_i$ and $H_{2,ij}$ are defined respectively by \eqref{eq:def-H} and \eqref{eq:def-H-2}. Thanks to \eqref{eq:lim-phi-eps}, it thus clearly holds that
\begin{equation}\label{a3}
 \lim_{\ep\to 0} H_{\varepsilon,i} (Y) = \Psi_i (Y)\ ,
  \quad\text{and}\quad
 \lim_{\ep\to 0} H_{\varepsilon, 2,ij} (Y) 
     = \Psi_{2,ij} (Y) \ , 
\end{equation}
where the limits are simple limits of functions and where we have:
  \[ \Psi_i (Y) \assign \tmop{sgn} (y - z) (f_i (y) - f_i (z)) \ , \qquad \Psi_{2,ij} (Y) \assign \tmop{sgn} (y - z) (f_{2,ij} (y) - f_{2,ij} (z)) \ . \]
Taking relations \eqref{a1}, \eqref{a2} and \eqref{a3} into account, we can now take limits as $\varepsilon \rightarrow 0$ in (\ref{eq:h-dyn}). This ensures the convergence of the quantity $h_{\varepsilon,st}^{\natural}$ to some limit $\Phi^{\natural}_{st}$ (for all $s<t\in I$), and using~(\ref{preliminary-est-h-nat}) we get that the path $\Phi (Y) \assign | y - z |$ satisfies the following equation:
\begin{equation}\label{eq:phi-y}
\delta \Phi (Y)_{st} = \Psi_i (Y_s) \mathbbm{X}^{1,i}_{st} + \Psi_{2,ij} (Y_s) \mathbbm{X}^{2,ij}_{st} - \omega_M (s, t) + \int_s^t \mathbf{1}_{\{y_u = z_u\}}\mathd (\mu_u + \nu_u)+\Phi^{\natural}_{st} \ .
\end{equation}
Moreover, invoking relation \eqref{preliminary-est-h-nat-2},  we have for all $s<t\in I$:
	\begin{equation}\label{boun-phi-nat}
 | \Phi^{\natural}_{s t} | \leqslant C_{f,p}  \big[ \omega_{\star} (s, t)+\omega_{\mathbbm{X}} (s, t)^{2 / 3} \omega_{\Delta} (s, t)^{1/3} + \omega_{\mathbbm{X}} (s, t)^{1 / 3} \omega_{\Delta,
     \natural} (s, t)\big]^{3 / p}\ . 
		\end{equation}
Here and in the sequel, we denote by $C_{f,p}$ any quantity that only depends on $f$ and $p$.		
  
 	\
  
  {\noindent}\tmtextbf{Step 3: Bounds for $\omega_{\Delta}$ and $\omega_{\Delta, \natural}$.} Let us now estimate $\omega_{\Delta}$ and $\omega_{\Delta, \natural}$ in terms of $\omega_{\star}$. To this end, we can first use the fact that the path $\Delta \assign y - z$ is (obviously)
  given by $h (Y)$ with the choice $\varphi (\xi) = \psi (\xi) \assign \xi$.
  In this case $h^{\natural} = y^{\natural} - z^{\natural}$, $\normtrip{\psi}=1$, so that (\ref{preliminary-est-h-nat}) becomes
  \[ | y^{\natural}_{s t} - z^{\natural}_{s t} | \leqslant C_{f,p}\,  \big[ \omega_{\star} (s, t) +\omega_{\mathbbm{X}} (s, t)^{2 / 3} \omega_{\Delta} (s, t)^{1/3}+ \omega_{\mathbbm{X}} (s, t)^{1 / 3} \omega_{\Delta,\natural} (s, t)\big]^{3 / p}  \]
for all $s<t\in I$, and accordingly we have
	\[ \omega_{\Delta,\natural} (s, t) \leqslant C_{f,p}^{(1)}\big[ \omega_{\star} (s, t) +\omega_{\mathbbm{X}} (s, t)^{2 / 3} \omega_{\Delta} (s, t)^{1/3}+ \omega_{\mathbbm{X}} (s, t)^{1 / 3} \omega_{\Delta,\natural} (s, t)\big] \]
	for some fixed constant $C_{f,p}^{(1)}$. As a result, for any interval $I_0\subset I$ satisfying 
	\begin{equation}\label{cond-i-zero}
	C_{f,p}^{(1)}\omega_{\mathbbm{X}} (I_0)^{1 / 3} \leqslant 1/2 \ ,
	\end{equation}
	and for all $s<t\in I_0$, we have
	\begin{equation}\label{first-bou-omega-nat}
 \omega_{\Delta,\natural} (s, t) \leqslant 2C_{f,p}^{(1)}\big[ \omega_{\star} (s, t) +\omega_{\mathbbm{X}} (s, t)^{2 / 3} \omega_{\Delta} (s, t)^{1/3} \big] \ .
\end{equation}
	Besides, going back to the equation satisfied by $\Delta$ (again, take $\varphi(\xi)=\xi$ in (\ref{eq:h-dyn})), we easily obtain that for all $s<t\in I$,
	$$|\delta \Delta_{st}| \leqslant C_{f,p} \big[ \|y-z\|_{\infty;[s,t]}^p \omega_{\mathbbm{X}}(s,t)+\omega_M(s,t)^p+\omega_{\Delta,\natural}(s,t)^3 \big]^{1/p} \ ,$$
so
$$\omega_\Delta(s,t)\leqslant C_{f,p} \big[  \|y-z\|_{\infty;[s,t]}^p \omega_{\mathbbm{X}}(s,t)+\omega_M(s,t)^p+\omega_{\Delta,\natural}(s,t)^3\big]$$
and for any interval $I_0\subset I$ satisfying (\ref{cond-i-zero}), we get by (\ref{first-bou-omega-nat})
$$
\omega_\Delta(s,t)\leqslant C_{f,p}^{(2)}\big[  \|y-z\|_{\infty;[s,t]}^p \omega_{\mathbbm{X}}(s,t)+\omega_M(s,t)^p+\omega_{\star}(s,t)^3+\omega_{\mathbbm{X}} (s, t)^{2} \omega_{\Delta} (s, t)\big]\ ,
$$
for some constant $C_{f,p}^{(2)}$. Finally, for any interval $I_0\subset I$ satisfying both (\ref{cond-i-zero}) and
\begin{equation}\label{cond-i-zero-two}
C_{f,p}^{(2)}\omega_{\mathbbm{X}} (I_0)^{2} \leqslant 1/2 \ ,
\end{equation} 
and for all $s<t\in I_0$, we have
\begin{equation}\label{boun-omega-delta}
\omega_\Delta(s,t)\leqslant 2C_{f,p}^{(2)}\big[  \|y-z\|_{\infty;[s,t]}^p \omega_{\mathbbm{X}}(s,t)+\omega_M(s,t)^p+\omega_{\star}(s,t)^3\big] \ .
\end{equation}
		
 	\
  
  {\noindent}\tmtextbf{Step 4: Conclusion.}  By injecting (\ref{first-bou-omega-nat}) and (\ref{boun-omega-delta}) into (\ref{boun-phi-nat}), we can derive the following assertion: for any interval $I_0\subset I$ satisfying (\ref{cond-i-zero}) and (\ref{cond-i-zero-two}), and all $s<t\in I_0$, it holds that
  \[ | \Phi^{\natural}_{s t} | \leqslant C_{f,p} \big[ \omega_\star(s,t)^{3/p}+\|y-z\|_{\infty;[s,t]} \omega_{\mathbbm{X}}(s,t)^{3/p}+\omega_M(s,t)\omega_{\mathbbm{X}}(s,t)^{2/p}\big]  \ , \]
	which, by the definition of $\omega_\star$, gives 
\[ | \Phi^{\natural}_{s t} | \leqslant C_{f,p} \big[ \|y-z\|_{\infty;[s,t]} \omega_{\mathbbm{X},Y}(s,t)^{3/p}+\omega_M(s,t)\omega_{\mathbbm{X}}(s,t)^{1/p}\big]  \ . \]
Going back to eq.~(\ref{eq:phi-y}) and observing that $\|y-z\|_{\infty;[s,t]} =\sup_{[s,t]} \Phi(Y)$, we obtain that for any such interval $I_0$ and for all $s<t\in I_0$,
  \begin{equation*}
	\delta \Phi(Y)_{st} + \omega_M (s, t)\leqslant C_{f,p}\, \big(\sup_{[s, t]} \Phi (Y)+\omega_M(s,t)\big)\big[\omega_{\mathbbm{X}} (s, t)+\omega_{\mathbbm{X},Y}(s,t)^3\big]^{1/p}+  \int_s^t
     \mathbf{1}_{\{y_u=z_u\}} (\mathd \mu_u + \mathd \nu_u) \ .
		\end{equation*}
We are finally in a position to apply the Rough Gronwall Lemma \ref{lemma:basic-rough-gronwall} with $\om_{1}\assign \om_{\mathbbm{X}}+\om_{\mathbbm{X},Y}^{3}$ and $\om_{2}(s,t)\assign \int_s^t \mathbf{1}_{\{y_u=z_u\}} (\mathd \mu_u + \mathd \nu_u)$, and assert that for every $0\leqslant s<t\leqslant T$, 
  \[ \sup_{[s, t]} \Phi (Y) + \omega_M (s, t) \leqslant C_{f,p,\mathbbm{X},Y}\Big[  \Phi(Y_s)  +
     \int_s^t \mathbf{1}_{\{y_u = z_u\}} (\mathd \mu_u + \mathd \nu_u)\Big] \ , \]
		that is
		  \[ \sup_{r\in [s, t]} |y_r-z_r| + \omega_M (s, t) \leqslant C_{f,p,\mathbbm{X},Y}\Big[ | y_s-z_s| +
     \int_s^t \mathbf{1}_{\{y_u = z_u\}} (\mathd \mu_u + \mathd \nu_u)\Big] \ , \]
		for some constant $C_{f,p,\mathbbm{X},Y}$. 
		
		\smallskip
		
\noindent
Assume now that $[s, t]$ is an interval where $y \neq z$ in $(s, t)$ but $y (s)
  = z (s)$. Then
  \[ \sup_{r\in [s, t]} |y_r-z_r| + \omega_M (s, t) \leqslant 0 \]
  which implies that $\sup_{r\in [s, t]} |y_r-z_r| = 0$ everywhere so we find a
  contradiction and such interval cannot exist. This concludes the proof of
  uniqueness.
\end{proof}

\section{Existence}\label{sec:existence}

\subsection{The one-dimensional case}

\begin{theorem}\label{theo:exi}
Let $T>0$ and $a>0$. If $f\in \mathcal{C}_b^2(\mathbbm{R};\mathcal{L}(\mathbbm{R}^N,\mathbbm{R}))$ and $\mathbbm{X}$ is a geometric $N$-dimensional $p$-variation rough path, then Problem~\eqref{rrde-informal} admits at least one solution $(y,m)$ on $[0,T]$ with initial condition $a$.
\end{theorem}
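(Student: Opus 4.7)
The strategy is to construct the solution by smooth approximation followed by a compactness argument, relying on the same sewing/rough-Gronwall toolbox deployed in Section~\ref{sec:uniqueness}. Since $\mathbbm{X}$ is geometric, Definition~\ref{defi-rough-path} supplies a sequence of smooth paths $x^\varepsilon:[0,T]\to\mathbbm{R}^N$ whose canonical lifts $\mathbbm{X}^\varepsilon=(\mathbbm{X}^{\varepsilon,1},\mathbbm{X}^{\varepsilon,2})$ converge to $\mathbbm{X}$ in $p$-variation. For each fixed $\varepsilon>0$, the classical one-dimensional Skorokhod problem driven by the smooth signal $x^\varepsilon$ admits a unique solution $(y^\varepsilon,m^\varepsilon)$, with $y^\varepsilon\geqslant 0$ and $m^\varepsilon$ non-decreasing of bounded variation satisfying $\int_0^T y^\varepsilon_s\,\mathd m^\varepsilon_s=0$; a direct Taylor expansion shows that $(y^\varepsilon,m^\varepsilon)$ satisfies \eqref{eq:rrde} with $\mathbbm{X}^\varepsilon$ in place of $\mathbbm{X}$ for some remainder $y^{\varepsilon,\natural}\in V^{p/3}_{2,\text{loc}}$.

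The main analytical work consists in deriving $\varepsilon$-uniform bounds. Applying $\delta$ to the approximate equation and using Chen's relation~\eqref{chen-rela}, one obtains -- exactly as in Step~1 of the uniqueness proof, but considerably simpler since no doubling of variables is required -- an estimate of the schematic form
\begin{equation*}
|\delta y^{\varepsilon,\natural}_{sut}|\leqslant C_f\,\big[\omega_{\mathbbm{X}^\varepsilon}(s,t)+\omega_{m^\varepsilon}(s,t)^{p/3}\omega_{\mathbbm{X}^\varepsilon}(s,t)^{1/3}+\omega_{y^\varepsilon}(s,t)^{1/3}\omega_{\mathbbm{X}^\varepsilon}(s,t)^{2/3}\big]^{3/p}.
\end{equation*}
The sewing Lemma~\ref{lemma-lambda} then bounds $y^{\varepsilon,\natural}$ itself by the same right-hand side, and reinserting this bound into the equation yields control of $\omega_{y^\varepsilon}$ in terms of $\omega_{\mathbbm{X}^\varepsilon}$, $\omega_{m^\varepsilon}$ and $\|y^\varepsilon\|_\infty$. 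The crucial closure step exploits the one-dimensional Skorokhod representation $m^\varepsilon_t=\big(-\inf_{s\leqslant t}(a+\int_0^s f(y^\varepsilon_r)\,\mathd x^\varepsilon_r)\big)_+$, which gives $\omega_{m^\varepsilon}(s,t)=\delta m^\varepsilon_{st}\leqslant 2\sup_{u\in[s,t]}|\delta(y^\varepsilon-m^\varepsilon)_{su}|$ and thereby re-expresses $\omega_{m^\varepsilon}$ in terms of $\omega_{\mathbbm{X}^\varepsilon}$, $\omega_{y^\varepsilon}$ and $\omega_{y^{\varepsilon,\natural}}$. The rough Gronwall Lemma~\ref{lemma:basic-rough-gronwall} finally converts the resulting coupled system into a uniform bound on $\|y^\varepsilon\|_{\bar{V}^p_1}$, $\|m^\varepsilon\|_{\bar{V}^1_1}$ and $\|y^{\varepsilon,\natural}\|_{\bar{V}^{p/3}_2}$, initially on intervals of size dictated by $\omega_{\mathbbm{X}}$ and then globally by patching.

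With the uniform bounds in hand, compactness produces the candidate solution. Helly's selection theorem applied to the non-decreasing family $(m^\varepsilon)$ yields a pointwise limit $m$ along a subsequence; combining this with the uniform $p$-variation control on $y^\varepsilon$ and the equation gives uniform convergence $y^{\varepsilon_n}\to y$ up to a further extraction. The polynomial drift and area contributions pass to the limit thanks to the continuity of $f,f_2$ and the $p$-variation convergence of $\mathbbm{X}^{\varepsilon_n}$, while $y^{\varepsilon_n,\natural}$ converges pointwise to some $y^\natural\in V^{p/3}_{2,\text{loc}}$ by lower semi-continuity of the variation norm.

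The main obstacle -- and the step that genuinely uses the one-dimensional structure -- is the verification of the reflection constraint $m_t=\int_0^t\mathbf{1}_{\{y_u=0\}}\,\mathd m_u$ for the limit. I expect to argue that each $m^{\varepsilon_n}$ is supported on $\{y^{\varepsilon_n}=0\}$, so that on any open set where $y>0$ the uniform convergence $y^{\varepsilon_n}\to y$ forces $y^{\varepsilon_n}>0$ eventually; hence $m^{\varepsilon_n}$, and by weak convergence its limit $m$, charges no such set. Combined with $y\geqslant 0$ (trivially inherited in the limit), a standard portmanteau/semi-continuity argument localizes $\mathrm{supp}(m)$ in $\{y=0\}$ and completes the existence proof.
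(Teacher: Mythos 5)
Your proposal follows essentially the same route as the paper's proof: smooth approximation by the canonical lifts of $x^\varepsilon$, uniform remainder bounds via the sewing lemma, control of the approximate reflection measures through a one-dimensional Skorokhod bound of the form $\delta m^\varepsilon_{st}\lesssim \|g^\varepsilon\|_{0,[s,t]}$ with $g^\varepsilon:=y^\varepsilon-m^\varepsilon$, followed by compactness and a passage to the limit. The ingredients and their ordering match what the paper does.

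Two small remarks on where your presentation deviates in emphasis (without affecting correctness). First, you invoke the rough Gronwall lemma to close the coupled system of estimates for $\omega_{\varepsilon,\natural}$ and $\omega_{m^\varepsilon}$; the paper does not need Gronwall here at all, because $f\in\mathcal{C}^2_b$ means no unbounded term like $\sup_r|y^\varepsilon_r|$ enters the bounds — plugging the Skorokhod bound for $\omega_{m^\varepsilon}$ into the sewing estimate for $\omega_{\varepsilon,\natural}$ and back yields a self-bounding inequality of the form $\omega_{m^\varepsilon}(s,t)\leqslant C\big[\omega_{\mathbbm{X}}(s,t)^{1/p}+\omega_{\mathbbm{X}}(I)^{1/p}\omega_{m^\varepsilon}(s,t)\big]$, which is closed by absorption on intervals where $\omega_{\mathbbm{X}}(I)$ is small; the global bound then follows by patching finitely many such intervals. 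Second, for the verification of $m_t=\int_0^t\mathbf{1}_{\{y_u=0\}}\,\mathd m_u$, your portmanteau argument works once stated carefully (use open sets $U$ with $\bar U\subset\{y>0\}$, so that $\inf_{\bar U}y>0$ gives $m^{\varepsilon_n}(U)=0$ eventually and hence $m(U)\leqslant\liminf_n m^{\varepsilon_n}(U)=0$), but a more direct route is to write $\int_0^T y_u\,\mathd m^{\varepsilon_n}_u=\int_0^T(y_u-y^{\varepsilon_n}_u)\,\mathd m^{\varepsilon_n}_u\to 0$ by uniform convergence of $y^{\varepsilon_n}$ and the uniform mass bound, and conclude $\int_0^T y_u\,\mathd m_u=0$ by weak convergence; combined with $y\geqslant 0$ and $\mathd m\geqslant 0$ this gives the constraint immediately.
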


Just as in \cite{aida_reflected_2015,aida-2016}, our strategy towards existence will appeal to some a priori bound on the measure term of the (approximated) equation. The result more generally applies to the so-called Skorohod problem and it can be read as follows in the one-dimensional case.   

\begin{lemma}\label{lem:skorohod-bound}
Let $g$ be a continuous $\mathbbm{R}$-valued path defined on some interval $I=[\ell_1,\ell_2]$, and consider a solution $(y,m)\in \mathcal{C}(I;\mathbbm{R}_{\geqslant 0}) \times V^1_1(I;\mathbbm{R}_{\geqslant 0})$ of the Skorohod problem associated with $g$ in the domain $\mathbbm{R}_{\geqslant 0}$, that is $(y,m)$ satisfies for all $s<t\in I$
$$\arraycolsep=1pt\def\arraystretch{1.8}
\left\lbrace\begin{array}{c}
\delta y_{st}=\delta g_{st}+\delta m_{st} \ , \\
y_{\ell_1}=g_{\ell_1} \ , \ m_t=\int_0^t \mathbf{1}_{\{y_u=0\}} \mathd m_u
\end{array} \right. \ .
$$
Then for all $s<t\in I$ it holds that 
\begin{equation}\label{skorohod-bound}
\delta m_{st}\leqslant 8\,  \lVert g \rVert_{0,[s,t]} \ ,
\end{equation}
where $\lVert g \rVert_{0,[s,t]}\assign \sup_{s\leqslant u<v\leqslant t} |\delta g_{uv}|$.
\end{lemma}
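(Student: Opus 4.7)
The plan is to localize the growth of $m$ within $[s,t]$ by identifying its innermost interval of support, and then to read off the bound from the reflection identity $\delta y = \delta g + \delta m$ evaluated at the endpoints of that interval. Two structural ingredients make this work: (i) the constraint $m_t = \int_0^t \mathbf{1}_{\{y_u=0\}}\mathd m_u$, which says that the measure $\mathd m$ lives on the zero set of $y$; and (ii) the fact that a non-decreasing path in $V^1_1$ is continuous, since for a monotone path the $V^1$ control coincides with its total increment, so its regularity is exactly uniform continuity.

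If $y>0$ throughout $[s,t]$, then (i) already forces $\delta m_{st}=0$. Otherwise, the set $Z\assign \{u\in [s,t] : y_u=0\}$ is nonempty and, by continuity of $y$, closed in $[s,t]$. I would take $u_*\assign \inf Z$ and $v_*\assign \sup Z$. On $[s,u_*)\cup (v_*,t]$ the path $y$ is strictly positive, so (i) implies that $\mathd m$ puts no mass there; combined with the continuity of $m$ this gives $m_s=m_{u_*}$ and $m_t=m_{v_*}$, and hence $\delta m_{st}=\delta m_{u_* v_*}$.

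Since $y_{u_*}=y_{v_*}=0$ by definition of $u_*,v_*$, the reflection identity evaluated at this pair yields $0 = \delta g_{u_* v_*}+\delta m_{u_* v_*}$, and therefore
\begin{equation*}
0 \ \leqslant \ \delta m_{st} \ = \ -\delta g_{u_* v_*} \ \leqslant \ |\delta g_{u_* v_*}| \ \leqslant \ \lVert g \rVert_{0,[s,t]},
\end{equation*}
which is in fact sharper than the claimed bound (by a factor of $8$). The whole argument is little more than an algebraic rearrangement; the one substantive point to check carefully is the continuity of $m$, which I view as the only real obstacle, albeit a mild one given the $V^1_1$ hypothesis. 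The looser constant $8$ stated in the lemma is presumably chosen with an eye toward the more general multidimensional Skorohod setting discussed in Section \ref{subsec:general-existence}, where such a clean one-dimensional reduction is unavailable.
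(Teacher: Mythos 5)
Your proof is correct, and it takes a genuinely different — and in fact cleaner — route than the paper. The paper's argument is a quadratic (``energy'') estimate: it expands $|\delta y_{st}|^2 = |\delta g_{st}|^2 + |\delta m_{st}|^2 + 2\delta g_{st}\delta m_{st}$, rewrites the cross terms by a Stieltjes integration-by-parts, and kills the dangerous piece $2\int_s^t \delta y_{su}\,\mathd m_u = -2 y_s\, \delta m_{st} \leqslant 0$ using the support constraint together with $y \geqslant 0$; the constant $8$ then falls out of an absorbing step of the form $\delta m_{st} \leqslant 4\|g\|_{0,[s,t]} + \tfrac12\delta m_{st}$. That computation makes no use of one-dimensional structure beyond sign considerations, which is precisely why the identical chain of inequalities underlies the multidimensional Lemma~\ref{lem:boun-skoro-gene} and why the paper keeps the loose constant. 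Your argument instead exploits the ordering of $\mathbbm{R}$ head-on: you localize $\mathd m$ to the first and last zeros $u_*, v_*$ of $y$ in $[s,t]$ (which exist and belong to $Z$ by closedness of $Z$, itself a consequence of continuity of $y$), use $y_{u_*}=y_{v_*}=0$ to read off $\delta m_{u_*v_*} = -\delta g_{u_*v_*}$ directly from the Skorohod identity, and conclude $\delta m_{st} \leqslant \|g\|_{0,[s,t]}$ — a factor-of-$8$ improvement. You are also right that the one nontrivial technical point is the continuity of $m$ (needed to pass from ``$\mathd m$ has no mass on the open/half-open sets where $y>0$'' to ``$m_s=m_{u_*}$ and $m_{v_*}=m_t$''), and that this is guaranteed by the $V^1_1$ hypothesis: for a monotone path the $V^1$ control is $\omega_m(s,t)=\delta m_{st}$, and regularity of this control is exactly uniform continuity. (Even more directly, $m = y - g + \text{const}$ is a difference of continuous paths.) So the proposal is sound; the trade-off is that it buys a sharper constant and a shorter proof at the cost of being unavailable in the general domains of Section~\ref{subsec:general-existence}, a point you correctly anticipated.
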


{The proof of (\ref{skorohod-bound}) can be easily derived from the arguments of the proof of \cite[Lemma 2.3]{aida-sasaki} (namely, the same arguments as those leading to the forthcoming general Lemma \ref{lem:boun-skoro-gene}). Let us provide some details though, not least to give the non-initiated reader an insight on how the specific constraints of the reflecting problem can be exploited.}

\begin{proof}[Proof of Lemma \ref{lem:skorohod-bound}]
For all $s< t \in I$, one has
\begin{eqnarray*}
|\delta y_{st}|^2 \ =\ |\delta g_{st}|^2+|\delta m_{st}|^2+2 \delta g_{st} \delta m_{st}&= & |\delta g_{st}|^2+2\int_s^t \delta m_{su} \, \mathd m_u+2\int_s^t \delta g_{st} \, \mathd m_u \\
&= & |\delta g_{st}|^2+2\int_s^t \delta y_{su} \, \mathd m_u+2\int_s^t \delta g_{ut} \, \mathd m_u\ ,
\end{eqnarray*}
where we have just used the fact that $\delta m_{su} = \delta y_{su} - \delta g_{su}$ for the last identity. Moreover, since $\int_s^t y_u \mathd m_u=\int_s^t y_u \mathbf{1}_{\{y_u=0\}} \mathd m_u=0$ and $y_s \geqslant 0$, we get:
\begin{equation*}
|\delta y_{st}|^2
\leqslant  |\delta g_{st}|^2+2\int_s^t \delta g_{ut} \, \mathd m_u \ .
\end{equation*}
Therefore,
$$|\delta y_{st}|^2 \leqslant \lVert g\rVert^2_{0,[s,t]}+2 \, \lVert g\rVert_{0,[s,t]}\, \delta m_{st}\leqslant 5\, \lVert g\rVert^2_{0,[s,t]}+\frac14\, |\delta m_{st}|^2 \ ,$$
and so $\lVert y\rVert_{0,[s,t]} \leqslant 3\, \lVert g\rVert_{0,[s,t]}+\frac12 \, \delta m_{st}$. Finally,
$$\delta m_{st}\leqslant \lVert y\rVert_{0,[s,t]}+\lVert g\rVert_{0,[s,t]} \leqslant 4 \, \lVert g\rVert_{0,[s,t]} +\frac12\, \delta m_{st} \ , $$
and the result follows.
\end{proof}

\begin{proof}[Proof of Theorem \ref{theo:exi}]
We start from a sequence of smooth rough paths $\mathbbm{X}^{\varepsilon}$
converging to $\mathbbm{X}$ as $\varepsilon \to 0$, in the space of continuous $p$-variation geometric rough
paths. We can then find a regular control $\omega_{\mathbbm{X}}$ such that, for all $s,t\in [0,T]$,
\[ | \mathbbm{X}^1_{s t} | + | \mathbbm{X}^2_{s t} |^{1 / 2} \leqslant
   \omega_{\mathbbm{X}} (s, t)^{1 / p}, \qquad \sup_{\varepsilon >0}\, (|
   \mathbbm{X}^{\varepsilon, 1}_{s t} | + | \mathbbm{X}^{\varepsilon, 2}_{s t}
   |^{1 / 2}) \leqslant \omega_{\mathbbm{X}} (s, t)^{1 / p}\  . \]
For every $\varepsilon >0$, let $X^{\varepsilon}$ be the path which corresponds to
  $\mathbbm{X}^{\varepsilon}$ and consider the solution $y^{\varepsilon}$ to
  reflected ODEs starting from $y_0$:
$$
\left\lbrace
\arraycolsep=1pt\def\arraystretch{1.8}
  \begin{array}{c}
   \mathd y^\varepsilon_t= f (y^{\varepsilon}_t)\, \mathd X^{\varepsilon}_{t}+ \mathd m^{\varepsilon}_{t} \\
    y^{\varepsilon}_0=y_0 \quad \text{and} \quad m^{\varepsilon}_t=\int_0^t \mathbf{1}_{\{ y^{\varepsilon}_u=0\}}  \mathd m^{\varepsilon}_u 
  \end{array} 
	\right. \ .
	$$
{Recall that the existence (and uniqueness) of such a solution is a standard result, based on the Lipschitz regularity of the Skorohod map with respect to the supremum norm.} Then by Taylor expansion it is not difficult to show
  that these solutions correspond to rough solutions $(y^{\varepsilon},
  m^{\varepsilon})$ in the sense of~(\ref{eq:rrde})
  \begin{equation}
    \delta y^{\varepsilon}_{s t} = f_i(y^{\varepsilon}_s)
    \mathbbm{X}^{\varepsilon, 1,i}_{s t} + f_{2,ij} (y^{\varepsilon}_s)
    \mathbbm{X}^{\varepsilon, 2,ij}_{s t} + \delta m_{s t}^{\varepsilon} +
    y^{\varepsilon, \natural}_{s t} \qquad s, t \in [0, T] \label{eq:rde-y}
  \end{equation}
  where $y^{\varepsilon, \natural} \in V_{2}^{p / 3}([0,T];\mathbbm{R})$. Let us set from now on
	$$\omega_{y^\ep}(s,t):=\| y^\ep \|_{\bar{V}^p_1 ([s, t] ; E)}^p \quad , \quad \omega_{\ep,\natural}(s,t):=\| y^{\ep,\natural} \|_{\bar{V}^{p/3}_2 ([s, t] ; E)}^{p/3} \ ,$$
		$$\omega_{m^\ep}(s,t):=\| m^\ep \|_{\bar{V}^{1}_1 ([s, t] ; E)}=\delta m^\ep_{st}=m^\ep([s,t]) \ ,$$
and observe that from eq.~(\ref{eq:rde-y}) we have
	  \begin{equation}
    | \delta y^{\varepsilon}_{s t} | \leqslant C_f (\omega_{\mathbbm{X}} (s,
    t)^{1 / p} + \omega_{\mathbbm{X}} (s, t)^{2 / p}) +
    \omega_{m^{\varepsilon}} (s, t) + \omega_{\varepsilon, \natural} (s, t)^{3
    / p}\ . \label{eq:y-bound}
  \end{equation}
	Here and in the sequel, we denote by $C_f$, resp. $C_{f,p}$, any quantity that only depends on $f$, resp. $(f,p)$. 
  
  \
  
  {\noindent}\tmtextbf{Step 1: Bounds on the approximate solutions.} \ We
  would like to pass to the limit in $\varepsilon$ and obtain solutions of the
  limiting problem. In order to do so we need uniform estimates for
  $y^{\varepsilon, \natural}_{s t}$. They are obtained via an application of
  the sewing map. 

 To this end, one can proceed as in the proof of Theorem \ref{thm:uniqueness}, Step 1. Specifically, we can just replace $Y$ by $y$, $H$ by $f$ and $H_{2}$ by $f_{2}$ in relation \eqref{eq:sewing-one}. We then repeat all the steps up to relation~\eqref{expan-del-h-nat}, which yields the following relation for $\delta y^{\varepsilon, \natural}$ (for more simplicity, we neglect to write down the time indexes explicitly):
  \begin{eqnarray}
\delta y^{\varepsilon, \natural}
    & = & \llbracket \llbracket \nabla f_{2,ij}  \rrbracket \rrbracket(y^{\ep}) \delta y^{\ep}\,\mathbbm{X}^{\varepsilon,1,j} \mathbbm{X}^{\varepsilon,1,i} 
    - \llbracket \nabla f_i (\cdot) \llbracket \nabla f_j (\cdot) \rrbracket \rrbracket(y^{\ep}) \delta y^{\ep}\, \mathbbm{X}^{\varepsilon,1,j} \mathbbm{X}^{\varepsilon,1,i} \nonumber \\
&  &\hspace{2cm} 
+ \llbracket\nabla f_i  \rrbracket(y^{\ep}) f_{2,jk} (y^{\ep}) \mathbbm{X}^{\varepsilon,2,jk} \mathbbm{X}^{\varepsilon,1,i} 
+ \llbracket \nabla f_i \rrbracket(y^{\ep}) y^{\ep,\natural} \mathbbm{X}^{\varepsilon,1,i}\nonumber \\
&  &\hspace{3cm} 
+\llbracket \nabla f_i \rrbracket(y^{\ep}) \delta m^\varepsilon\, \mathbbm{X}^{\varepsilon,1,i} 
+ \llbracket \nabla f_{2,ij} \rrbracket(y^{\ep}) \delta y^{\ep}\, \mathbbm{X}^{\varepsilon,2,ij} \ .\label{expan-del-h-nat-y}
  \end{eqnarray}
Combining this expansion with (\ref{eq:y-bound}), we get, for every interval $I\subset [0,T]$ such that $\omega_{\mathbbm{X}}(I)\leqslant 1$ and all $s<u<t\in I$,
 \begin{eqnarray*}
| \delta y^{\varepsilon, \natural}_{s u t} |
& \leqslant &C_f\big[
     \omega_{y^{\varepsilon}} (s, t)^{1 / p} \omega_{\mathbbm{X}} (s, t)^{2 /
     p}+ ( \omega_{\mathbbm{X}} (s, t)^{2 / p} +
     \omega_{m^{\varepsilon}} (s, t) + \omega_{\varepsilon, \natural} (s,
     t)^{3 / p}) \omega_{\mathbbm{X}} (s, t)^{1 / p} \big]\\
		&\leqslant &C_{f,p} \big[\omega_{\mathbbm{X}} (s, t) +
     \omega_{\mathbbm{X}} (s, t)^{1 / 3}  \omega_{m^{\varepsilon}} (s, t)^{p/3} +\omega_{\mathbbm{X}} (s, t)^{1 / 3} \omega_{\varepsilon, \natural} (s,
     t)\big]^{3/p} \ .
		\end{eqnarray*}
We are therefore in a position to apply the sewing lemma and assert that for every interval $I\subset [0,T]$ such that $\omega_{\mathbbm{X}}(I)\leqslant 1$ and all $s<t\in I$, we have
  \[ |  y^{\varepsilon, \natural}_{s t} | \leqslant C_{f,p} \big[\omega_{\mathbbm{X}} (s, t) +
     \omega_{\mathbbm{X}} (s, t)^{1 / 3}  \omega_{m^{\varepsilon}} (s, t)^{p/3} +\omega_{\mathbbm{X}} (s, t)^{1 / 3} \omega_{\varepsilon, \natural} (s,
     t)\big]^{3/p} \ , \]
		which immediately entails that
		$$\omega_{\ep,\natural}(s,t) \leqslant C^{(1)}_{f,p} \big[\omega_{\mathbbm{X}} (s, t) +
     \omega_{\mathbbm{X}} (s, t)^{1 / 3}  \omega_{m^{\varepsilon}} (s, t)^{p/3} +\omega_{\mathbbm{X}} (I)^{1 / 3} \omega_{\varepsilon, \natural} (s,
     t)\big] \ ,$$
		for some constant $C^{(1)}_{f,p}$. As a result, for every interval $I\subset [0,T]$ such that 
  \begin{equation}
   \omega_{\mathbbm{X}}(I)\leqslant 1 \quad \text{and} \quad C^{(1)}_{f,p} \omega_{\mathbbm{X}} (I)^{1 / 3} \leqslant 1/2 \ ,
    \label{eq:I-small}
  \end{equation}
one has
  \begin{equation}
   \omega_{\ep,\natural}(s,t) \leqslant 2 C^{(1)}_{f,p} \big[\omega_{\mathbbm{X}} (s, t) +
     \omega_{\mathbbm{X}} (s, t)^{1 / 3}  \omega_{m^{\varepsilon}} (s, t)^{p/3} \big] \ , \qquad s<t \in I. \label{eq:ysharp-bound}
  \end{equation}
  
	\
  
  {\noindent}\tmtextbf{Step 2: Control of the approximate measures.} 
	Consider the path $g^\ep:[0,T]\to \mathbbm{R}$ defined as $g^\ep_t:=y^\ep_t-m^\ep_t$, and observe that $(y^\ep,m^\ep)$ is then a solution of the Skorohod problem in $\mathbbm{R}_{\geqslant 0}$ associated with $g^\ep$, in the sense of Lemma \ref{lem:skorohod-bound}. Therefore, by (\ref{skorohod-bound}), it holds that
\begin{equation}\label{appli-skorohod}
\omega_{m^\ep}(s,t)\leqslant 8 \, \| g^\ep \|_{0,[s,t]} \ .
\end{equation}
On the other hand, from eq.~(\ref{eq:rde-y}), we have
$$\delta g^\ep_{st}=f_i (y^{\varepsilon}_s)
     \mathbbm{X}^{\varepsilon, 1,i}_{s t} + f_{2,ij} (y^{\varepsilon}_s)
     \mathbbm{X}^{\varepsilon, 2,ij}_{s t} + y^{\varepsilon, \natural}_{s t} \ ,
     \qquad 0 \leqslant s \leqslant t \leqslant T \, ,$$
		and so
\begin{equation}\label{bound-g-ep}
\| g^\ep \|_{0,[s,t]} \leqslant C_f \big[\omega_{\mathbbm{X}} (s,
     t)^{1 / p} + \omega_{\mathbbm{X}} (s, t)^{2 / p} + \omega_{\ep,\natural} (s,
     t))^{3 / p} \big] \ .
		\end{equation}
		Injecting successively (\ref{bound-g-ep}) and (\ref{eq:ysharp-bound}) into (\ref{appli-skorohod}) yields that for every interval $I$ satisfying the conditions in (\ref{eq:I-small}) and every $s<t\in I$,
		$$\omega_{m^\ep}(s,t)\leqslant C^{(2)}_{f,p} \big[ \omega_{\mathbbm{X}}(s,t)^{1/p}+\omega_{\mathbbm{X}}(I)^{1/p} \omega_{m^\ep}(s,t) \big] \ ,$$
		for some constant $C^{(2)}_{f,p}$, and so, if we assume in addition that
\begin{equation}\label{cond-i-two}
C^{(2)}_{f,p}\omega_{\mathbbm{X}}(I)^{1/p} \leqslant 1/2 \ ,
\end{equation}
		we obtain 
\begin{equation}\label{bou-m-ep-loc}
\omega_{m^\ep}(s,t)\leqslant 2C^{(2)}_{f,p} \omega_{\mathbbm{X}}(s,t)^{1/p} \ , \qquad s<t\in I \ .
\end{equation}
From here we can easily conclude that
\begin{equation}\label{eq:measure-bounds}
\omega_{m^\ep}([0,T])\leqslant C_{f,p,\mathbbm{X}} 
\end{equation}
for some quantity $C_{f,p,\mathbbm{X}}$ independent from $\ep$.

	\

  {\noindent}\tmtextbf{Step 3: Passage to the limit for the measure.} With
  all the bounds in place we can now pass to the limit as $\varepsilon
  \rightarrow 0$ via subsequences. We start with the measure.
  Using~(\ref{eq:measure-bounds}) we can assert that there exists a weakly convergent subsequence
  of measures $(m^{\varepsilon (k)})_{k \geqslant 1}$ on $[0,T]$, and we will denote by $m$
  their limit. Then it holds that
  \begin{equation}
    m ([0, t]) = \lim_k m^{\varepsilon (k)} ([0, t]) \qquad t \in
    \mathfrak{C} \label{eq:convergence-mu}
  \end{equation}
  where $\mathfrak{C} \subseteq [0, T]$ is the (dense) set of continuity points of the function $t\mapsto m([0,t])$. Now consider any interval $I$ satisfying both the conditions in (\ref{eq:I-small}) and in (\ref{cond-i-two}), and for $s<t\in I$, introduce a sequence $s_\ell$, resp. $t_\ell$, of points in $\mathfrak{C}$ decreasing to $s$, resp. increasing to $t$, and such that $s_k<t_k$. Using (\ref{bou-m-ep-loc}), we have
	$$m(]s_\ell,t_\ell])=\lim_{k} m^{\ep(k)}(]s_\ell,t_\ell]) \leqslant C_{f,p} \, \omega_{\mathbbm{X}}(s,t)^{1/p} \ ,$$
	and so $m([s,t])\leqslant C_{f,p} \, \omega_{\mathbbm{X}}(s,t)^{1/p} $, which proves that the function $m_t:=m([0,t[)$ is continuous and accordingly that $m\in V^1_1([0,T];\mathbbm{R}_{\geqslant 0})$, as expected.

  \
  
  {\noindent}\tmtextbf{Step 4: Passage to the limit for the path.} Consider
  the subsequence $(y^{\varepsilon (k)}, m^{\varepsilon (k)})_k$ as defined in
  the previous step. Using \eqref{eq:y-bound} we have, for all $s, t \in [0, T]$,
  \[ \limsup_k | \delta y^{\varepsilon (k)}_{s t} | \leqslant C_f
     (\omega_{\mathbbm{X}} (s, t)^{1 / p} + \omega_{\mathbbm{X}} (s, t)^{2 /
     p}) + \omega_m (s, t) + \limsup_k \omega_{\varepsilon (k), \natural} (s,
     t)^{3 / p}\ ,\]
  and for every interval $I$ satisfying both the conditions in (\ref{eq:I-small}) and in (\ref{cond-i-two}) (we
  denote $\mathcal{J}$ the family of such intervals), we have
 $$\limsup_k \omega_{\varepsilon (k), \natural} (s, t)\leqslant  C_{f,p} \big[ \omega_{\mathbbm{X}} (s, t) + \omega_{\mathbbm{X}}
     (s, t)^{1 / 3}\omega_m (s, t)^{p/3}\big]\ ,
     \quad s< t \in I  \ .$$
  From this bound we can choose a further subsequence, still called
  $(y^{\varepsilon (k)}, m^{\varepsilon (k)})_k$ so that $y^{\varepsilon (k)}
  \rightarrow y$ in $C ([0, T];\mathbbm{R}_{\geqslant 0})$. It is easy now to pass to the limit in eq.~(\ref{eq:rde-y}) and
  conclude that there exists a map $y^\natural :\Delta_{[0,T]} \to \mathbbm{R}$ such that
  \[ \delta y = f_i (y) \mathbbm{X}^{1,i} + f_{2,ij} (y) \mathbbm{X}^{2,ij} + \delta m +
     y^{\natural} \ , \]
 and
  \[ |  y^{\natural}_{s t} | \leqslant C_{f,p} \big[ \omega_{\mathbbm{X}} (s, t) + \omega_{\mathbbm{X}}
     (s, t)^{1 / 3}\omega_m (s, t)^{p/3}\big]^{3 / p},
     \qquad s< t \in I \in \mathcal{J}. \]
The fact that $m_t=\int_0^t \mathbf{1}_{\{y_u=0\}} \, \mathd m_u$ (for all $t$) follows immediately from the relation $m^\ep_t=\int_0^t \mathbf{1}_{\{y^\ep_u=0\}} \, \mathd m^\ep_u$, and finally the pair $(y,m)$ does define a solution to the RRDE~(\ref{eq:rrde}).
\end{proof}

\smallskip

\subsection{Generalization to multidimensional domains}\label{subsec:general-existence}
We conclude this study with a few details on possible extensions of the previous arguments (towards existence) to more general multidimensional domains. Together, these results will thus offer a simplification of some of the arguments and topologies used in \cite{aida_reflected_2015,aida-2016}.

\smallskip

Let us first extend Definition \ref{def:sol-rrde} of a reflected rough solution to more general settings, along the classical approach of the reflected problem. Let $D \subset \mathbbm{R}^d$ be a connected domain and for every $x\in \partial D$, denote by $\mathcal{N}_x$ the set of
inward unit normal vectors at $x$, that is 
$$\mathcal{N}_x \assign \cup_{r>0} \mathcal{N}_{x,r} \  , \quad \mathcal{N}_{x,r} \assign
\{
n \in \mathbbm{R}^d: \,  |n| = 1, \, B(x-rn,r) \cap D = \emptyset
\}$$
where $B(z,r)\assign \{y \in  \mathbbm{R}^d: \ |y-z| < r\}$, for $z\in \mathbbm{R}^d$ and $r > 0$.

\begin{definition}\label{def:sol-rrde-multidim}
Given a time $T > 0$, an element $a\in D$, a differentiable function $f :\mathbbm{R}^d \to \mathcal{L} (\mathbbm{R}^N ;\mathbbm{R}^d)$ and a $p$-variation $N$-dimensional rough path $\mathbbm{X}$ with $2\leqslant p<3$, a pair $(y, m)\in  V^p_1 ([0, T] ; D) \times V^1_1 ([0, T] ;\mathbbm{R}^d)$ is said to solve the reflected rough equation in $D$ with initial condition $a$ if there exists a 2-index map $y^\natural \in V^{p/3}_{2, \text{loc}}([0,T];\mathbbm{R}^d)$ such that for all $s,t\in [0,T]$, we have
\begin{equation}
\left\lbrace
\arraycolsep=1pt\def\arraystretch{1.8}
  \begin{array}{c}
   \delta y_{s t} = f_i (y_s) \mathbbm{X}^{1,i}_{s t} + f_{2,ij} (y_s)
    \mathbbm{X}^{2,ij}_{s t} + \delta m_{s t} +y^\natural_{st} \\
    y_0=a \quad \text{and} \quad m_t=\int_0^t \mathbf{1}_{\{ y_u\in \partial D\}}n_{y_u}  \mathd |m|_u 
  \end{array} \label{eq:rrde-multidim}
	\right. \ ,
\end{equation}
where we have set $f_{2,ij} (\xi) \assign \nabla f_i (\xi)f_j (\xi)$, $|m|_t:=\|m\|_{\bar{V}^1_1 ([0, t] ; \mathbbm{R}^d)}$ and for each $y\in \partial D$, $n_y \in \cn_{y}$.
\end{definition}

\smallskip

The existence of a solution for (\ref{eq:rrde-multidim}) can actually be derived from the same arguments as in the one-dimensional situation. The only step of the procedure needing for a revision is the so-called Step 2, since it involves the a priori bound (\ref{skorohod-bound}) which is specific to the one-dimensional Skorohod problem. To this end, we shall exploit the following (sophisticated) substitute, borrowed from  \cite[Lemma 2.2]{aida_reflected_2015}.

\begin{lemma}\label{lem:boun-skoro-gene}
Let $D \subset \mathbbm{R}^d$ be connected domain that satisfies the two following assumptions:

\smallskip

\noindent
{\em\textbf{(A)}} There exists a constant $r_0 > 0$ such that $\cn_x=\cn_{x,r_0}\neq \emptyset$ for any $x\in \partial D$  ;

\smallskip

\noindent
{\em\textbf{(B)}} There exist constants $\delta_0 > 0$ and $\beta \geqslant 1$ satisfying: for every $x\in \partial D$, there exists a unit vector $l_x$ such that $\langle l_x ,n\rangle \geqslant 1/\beta$
for every $n \in \cup_{y\in B(x,\delta_0)\cap \partial D} \cn_y$  . 

\smallskip

\noindent
Let $g\in V_1^p(I;\mathbbm{R}^d)$, for some interval $I=[\ell_1,\ell_2]$, such that $g_{\ell_1}\in D$, and consider a solution $(y,m)\in \mathcal{C}(I;D) \times V^1_1(I;\mathbbm{R}^d)$ of the Skorohod problem associated with $g$ in the domain $D$, that is $(y,m)$ satisfies for all $s<t\in I$
$$\arraycolsep=1pt\def\arraystretch{1.8}
\left\lbrace\begin{array}{c}
\delta y_{st}=\delta g_{st}+\delta m_{st} \ , \\
y_{\ell_1}=g_{\ell_1} \ , \ m_t=\int_0^t \mathbf{1}_{\{y_u=0\}} n_{y_u}\mathd |m|_u
\end{array} \right. \ ,
$$
where $|m|_t:=\|m\|_{\bar{V}^1_1 ([0, t] ; \mathbbm{R}^d)}$ and for each $y\in \partial D$, $n_y \in \cn_{y}$.
Then for all $s<t\in I$ it holds that 
\begin{equation}\label{skorohod-bound-general}
\| m\|_{V_1^1([s,t])} \leqslant C_1  [e^{p C_2 (1 + \|
     g \|_{0, [s, t]})} \|g\|_{\bar{V}_1^p([s,t])} + 1]^{}
     (e^{C_2 (1 + \| g \|_{0, [s, t]})} + 1) \| g
     \|_{0, [s, t]}\ ,
\end{equation}
where $C_1, C_2$ are constants depending only on the domain and $\lVert g \rVert_{0,[s,t]}\assign \sup_{s\leqslant u<v\leqslant t} |\delta g_{uv}|$.
\end{lemma}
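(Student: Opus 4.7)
The plan is to localize the multidimensional bound via condition (B), deduce a one-dimensional-type control of $|m|$ on each piece, and count the pieces through the $p$-variation of $g$. Specifically, on any subinterval $J = [u,v] \subset I$ with $\|y - y_u\|_{0,J} \leq \delta_0$, condition (B) furnishes a unit vector $l \assign l_{y_u}$ such that $\langle l, n_{y_r}\rangle \geq 1/\beta$ for every $r \in J$ with $y_r \in \partial D$. Combining the identity $\delta m_{uv} = \int_u^v \mathbf{1}_{\{y_r \in \partial D\}} n_{y_r}\,\mathd|m|_r$ with $\delta m = \delta y - \delta g$ then yields the analogue of the one-dimensional estimate from the proof of Lemma~\ref{lem:skorohod-bound}:
\begin{equation*}
\tfrac{1}{\beta}\,\|m\|_{V^1_1(J)} \leq \langle l,\delta m_{uv}\rangle = \langle l,\delta y_{uv}\rangle - \langle l,\delta g_{uv}\rangle \leq \delta_0 + \|g\|_{0,J}.
\end{equation*}

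Next, I would partition $I$ adaptively into subintervals $J_k = [\sigma_k,\sigma_{k+1}]$, $k=0,\ldots,K-1$, defined inductively by stopping at the first time after $\sigma_k$ at which $\|y-y_{\sigma_k}\|_{0,[\sigma_k,\,\cdot\,]}$ reaches $\delta_0/2$ (the last piece may be shorter). Applying the local estimate on each $J_k$ and summing gives $\|m\|_{V^1_1(I)} \leq K\beta(\delta_0 + \|g\|_{0,I})$. To bound $K$ I would use the superadditivity of the control $\omega_y = \|y\|_{\bar V^p_1}^p$: each $J_k$ contributes $\omega_y(J_k) \geq (\delta_0/2)^p$, so $K(\delta_0/2)^p \leq \|y\|^p_{\bar V^p_1(I)}$. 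Combined with $\delta y = \delta g + \delta m$ and the trivial bound $\|m\|_{\bar V^p_1} \leq \|m\|_{V^1_1}$ valid for BV paths, this yields, writing $M \assign \|m\|_{V^1_1(I)}$ and $G \assign \|g\|_{\bar V^p_1(I)}$, an implicit inequality of the form
\begin{equation*}
M \leq C(\delta_0,\beta)\,(\delta_0 + \|g\|_{0,I})\,(G^p + M^p + \delta_0^p).
\end{equation*}

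The third step, and the technical heart of the proof, is to resolve this implicit inequality, since a direct fixed-point argument fails once $M$ is large. I would further split $I$ into pieces $I_j$ on which $\|g\|^p_{\bar V^p_1(I_j)}$ is small enough to absorb the $M^p$-contribution into the linear term via a bootstrap, producing $\|m\|_{V^1_1(I_j)} \leq C'(\delta_0 + \|g\|_{0,I})$ on each; the number of such pieces is controlled by a constant multiple of $G^p$. Concatenating these linear bounds along the chain of pieces in a Gronwall-type cascade — in which condition (A) is used to ensure that the inward-ball property is preserved so that Step~1 may be iterated consistently from one piece to the next — then produces the exponential factors $e^{pC_2(1+\|g\|_{0,I})}$ and $e^{C_2(1+\|g\|_{0,I})}$ of~(\ref{skorohod-bound-general}). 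The main obstacle is precisely this bookkeeping: Step~1 is the multidimensional analogue of the one-dimensional pairing of Lemma~\ref{lem:skorohod-bound} and the counting in Step~2 is standard, but resolving the implicit inequality while tracking how the oscillation of $y$ accumulates across pieces is what generates the exponential shape of the right-hand side and forces the use of both assumptions (A) and (B).
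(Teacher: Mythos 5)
The paper does not prove this lemma: it is stated verbatim as ``borrowed from \cite[Lemma 2.2]{aida_reflected_2015}'' and serves as a black-box input to the existence proof in Section~\ref{subsec:general-existence}. There is therefore no in-paper argument to compare against, and your proposal is a freestanding attempt at a reconstruction of Aida's proof.

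Assessed on its own terms, the sketch has a genuine unresolved gap at Step~3, which you yourself flag as ``the technical heart.'' The implicit inequality $M \leqslant C(\delta_0 + \|g\|_{0,I})(G^p + M^p + \delta_0^p)$ cannot be resolved by shrinking the subinterval so that $\|g\|_{\bar V^p_1(I_j)}$ is small: the coefficient multiplying $M^p$ is $C(\delta_0 + \|g\|_{0,I_j})$, which stays bounded below by $C\delta_0$ on every subinterval, so the superlinear term is not absorbed by localizing $G$. Moreover, your counting produces $K\sim \|g\|^p_{\bar V^p_1}$ pieces, which would yield an exponential in $\|g\|_{\bar V^p_1}$, whereas the asserted bound~\eqref{skorohod-bound-general} has exponentials in $\|g\|_{0,[s,t]}$ with $\|g\|_{\bar V^p_1}$ appearing only as a polynomial prefactor; the shapes do not match, which indicates that the actual iteration in Aida's argument is organized differently from the one you propose.

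A second gap concerns condition \textbf{(A)}. Your Step~1 uses only \textbf{(B)}, pairing $\delta m$ against the fixed vector $l_{y_u}$, and you invoke \textbf{(A)} only vaguely to ``preserve the inward-ball property.'' In fact the uniform exterior ball condition is used in the source to obtain the quantitative geometric estimate $\langle n_x, z - x\rangle \geqslant -|z-x|^2/(2r_0)$ for $x\in\partial D$, $z\in\bar D$, which is the genuine multidimensional analogue of the cancellation $\int_s^t y_u\,\mathd m_u = 0$ exploited in the proof of Lemma~\ref{lem:skorohod-bound}. Without this estimate your Step~1 gives a local bound but no mechanism analogous to the $L^2$-type identity of the one-dimensional proof, and it is not clear that the pairing against $l_x$ alone, propagated across pieces, can generate the claimed bound. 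A smaller issue: $l_x$ is defined only for $x\in\partial D$, while your choice $l \assign l_{y_u}$ requires $y_u$ to be a boundary point; when $y_u$ is interior one has to pick a nearby boundary reference point, which forces the radius $\delta_0$ in \textbf{(B)} to be halved and slightly changes the constants.
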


\smallskip

\begin{theorem}\label{theo:existence-general-domains}
Let $D\subset \mathbbm{R}^d$ be a connected domain satisfying Conditions {\em\textbf{(A)}} and {\em\textbf{(B)}} of Lemma \ref{lem:boun-skoro-gene}. Then there exists at least one solution $(y,m)$ to the reflection problem \eqref{eq:rrde-multidim} in $D$.
\end{theorem}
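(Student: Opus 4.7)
The plan is to mimic the four-step structure of the proof of Theorem \ref{theo:exi} verbatim, only substituting the one-dimensional Skorohod a priori bound (Lemma \ref{lem:skorohod-bound}) by its multidimensional counterpart (Lemma \ref{lem:boun-skoro-gene}). First I would start from a sequence $\mathbbm{X}^\varepsilon$ of smooth geometric approximations of $\mathbbm{X}$ with uniformly controlled variations, and solve the classical reflected ODE
$$\mathd y^\varepsilon_t = f(y^\varepsilon_t)\,\mathd X^\varepsilon_t + \mathd m^\varepsilon_t, \quad m^\varepsilon_t = \int_0^t \mathbf{1}_{\{y^\varepsilon_u \in \partial D\}} n_{y^\varepsilon_u}\,\mathd |m^\varepsilon|_u,$$
whose solvability in $D$ under assumptions \textbf{(A)}--\textbf{(B)} is a standard fact (see the references in \cite{aida_reflected_2015,aida-2016}). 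A Taylor expansion promotes $(y^\varepsilon,m^\varepsilon)$ to a rough solution in the sense of \eqref{eq:rrde-multidim}, with remainder $y^{\varepsilon,\natural}\in V^{p/3}_{2,\mathrm{loc}}$.

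Next I would carry out Step 1 of the 1D proof essentially unchanged: computing $\delta y^{\varepsilon,\natural}$ and applying the sewing lemma \ref{lemma-lambda} leads, on any interval $I$ with $\omega_{\mathbbm{X}}(I)$ small enough, to the analogue of \eqref{eq:ysharp-bound},
$$\omega_{\varepsilon,\natural}(s,t) \leqslant C_{f,p}\bigl[\omega_{\mathbbm{X}}(s,t) + \omega_{\mathbbm{X}}(s,t)^{1/3}\omega_{|m^\varepsilon|}(s,t)^{p/3}\bigr], \quad s<t\in I,$$
where $\omega_{|m^\varepsilon|}(s,t) = |m^\varepsilon|_t - |m^\varepsilon|_s$. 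Combined with \eqref{eq:rrde-multidim} one gets a bound of the form
$$\|g^\varepsilon\|_{\bar V^p_1([s,t])}^p + \|g^\varepsilon\|_{0,[s,t]}^p \leqslant C_f\bigl[\omega_{\mathbbm{X}}(s,t) + \omega_{\varepsilon,\natural}(s,t)\bigr]$$
for $g^\varepsilon_t := y^\varepsilon_t - m^\varepsilon_t$, i.e.\ for the "driver" of the Skorohod problem solved by $(y^\varepsilon,m^\varepsilon)$.

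The main obstacle is Step 2, because the new bound \eqref{skorohod-bound-general} is no longer linear in $\|g^\varepsilon\|_{0,[s,t]}$ but contains exponential factors $e^{C_2(1+\|g^\varepsilon\|_{0,[s,t]})}$. The fix will be a bootstrap on a small enough interval $I$: choose $I$ such that the sewing estimate forces $\|g^\varepsilon\|_{0,[s,t]}\leqslant 1$, say, uniformly in $\varepsilon$ (which is possible because the sewing estimate bounds $\|g^\varepsilon\|_{0,[s,t]}$ by $C_f[\omega_{\mathbbm{X}}(I)^{1/p}+\omega_{|m^\varepsilon|}(I)\omega_{\mathbbm{X}}(I)^{1/p\cdot\text{(something)}}]$, and under a smallness assumption on $\omega_{\mathbbm{X}}(I)$ we can absorb the $\omega_{|m^\varepsilon|}$ factor). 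Once the exponentials are uniformly bounded, \eqref{skorohod-bound-general} reduces to an estimate of the form
$$\omega_{|m^\varepsilon|}(s,t) \leqslant C'_{f,p}\bigl[\omega_{\mathbbm{X}}(s,t)^{1/p} + \omega_{\mathbbm{X}}(I)^{1/p}\,\omega_{|m^\varepsilon|}(s,t)\bigr],$$
and a further smallness assumption yields the local bound $\omega_{|m^\varepsilon|}(s,t)\leqslant C_{f,p,\mathbbm{X}}\omega_{\mathbbm{X}}(s,t)^{1/p}$. Patching finitely many such intervals together gives a uniform bound $|m^\varepsilon|([0,T])\leqslant C_{f,p,\mathbbm{X}}$.

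With these $\varepsilon$-uniform estimates in hand, Steps 3--4 of the 1D proof go through: extract a weakly convergent subsequence of $\mathbbm{R}^d$-valued vector measures $m^{\varepsilon(k)} \rightharpoonup m$, and then along a further subsequence $y^{\varepsilon(k)}\to y$ uniformly, using an Arzelà–Ascoli argument based on \eqref{eq:y-bound}. Passing to the limit in \eqref{eq:rde-y} produces a remainder $y^\natural$ with the expected sewing bound and yields the rough expansion in \eqref{eq:rrde-multidim}. The only genuinely new verification compared to the 1D case is the geometric condition $m_t = \int_0^t \mathbf{1}_{\{y_u\in\partial D\}}n_{y_u}\,\mathd |m|_u$: this will follow from the closedness of $\partial D$ (so that the support of $m$ lies in $\{y_u\in\partial D\}$), from uniform convergence $y^{\varepsilon(k)}\to y$, and from a measurable selection argument to represent the Radon–Nikodym derivative $\mathd m/\mathd |m|$ as an element of $\cn_{y_u}$ using assumption \textbf{(A)} and the fact that $\cup_{y\in K\cap\partial D}\cn_y$ is closed for compact $K$.
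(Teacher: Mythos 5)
Your overall plan matches the paper's: reuse Steps~1,~3,~4 of the one-dimensional proof and replace the Skorohod a priori bound in Step~2 by Lemma~\ref{lem:boun-skoro-gene}. The gap is in the execution of Step~2. You propose to ``choose $I$ such that the sewing estimate forces $\|g^\varepsilon\|_{0,[s,t]}\leqslant 1$ uniformly in $\varepsilon$,'' but this is circular: the sewing estimate bounds $\|g^\varepsilon\|_{0,[s,t]}$ only in terms of $\omega_{\mathbbm{X}}(s,t)$ \emph{and} $\omega_{m^{\varepsilon}}(s,t)$, and the latter is precisely the quantity you are trying to control. In other words, after inserting the remainder bound into the Skorohod estimate you obtain a self-referential inequality of the schematic form
\begin{equation*}
\omega_{m^{\varepsilon}}(s,t)\ \leqslant\ \Psi\bigl(C_{f,p}\bigl(\omega_{\mathbbm{X}}(s,t)+\omega_{\mathbbm{X}}(s,t)\,\omega_{m^{\varepsilon}}(s,t)^{p}\bigr)\bigr),
\end{equation*}
with $\Psi$ containing exponential factors. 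Because of the exponential, smallness of $\omega_{\mathbbm{X}}(I)$ alone does \emph{not} let you linearize or absorb the $\omega_{m^{\varepsilon}}$ dependence: the inequality is consistent with $\omega_{m^{\varepsilon}}$ being arbitrarily large as long as $\omega_{\mathbbm{X}}>0$, so it does not by itself give a uniform bound.

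What is actually needed — and what the paper supplies — is a method-of-continuity (connectedness) argument. One introduces the monotone function $G_I(\lambda):=\Psi\bigl(C_{f,p}(1+\omega_{\mathbbm{X}}(I)\lambda^{p})\bigr)$, shrinks $I$ so that $G_I(3G_I(0))\leqslant 2G_I(0)$, and then shows that the set $\mathcal{A}:=\{t\in I:\ \omega_{m^{\varepsilon}}(s_0,t)\leqslant 2G_I(0)\}$ is nonempty, open and closed in $I$, hence equals $I$. This relies crucially on the \emph{regularity} of the control $\omega_{m^{\varepsilon}}$ (so that $\omega_{m^{\varepsilon}}(s_0,s_0+\delta)\to 0$ as $\delta\to 0$ and $t\mapsto\omega_{m^{\varepsilon}}(s_0,t)$ is continuous); this regularity itself has to be extracted from the Skorohod bound and the regularity of $\omega_{\mathbbm{X}}$, a point your proposal does not address. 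Once this continuity argument yields $\omega_{m^{\varepsilon}}(I)\leqslant 2G_I(0)$ on each small interval, your finite covering of $[0,T]$ and Steps~3--4 go through as you describe.
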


\begin{remark}
Of course, Theorem \ref{theo:exi} can retrospectively be obtained as a particular application of Theorem \ref{theo:existence-general-domains}. Nevertheless, we have found it important, for pedagogical reasons, to first provide a full and self-contained treatment of the one-dimensional situation.   
\end{remark}

\begin{proof}
As mentionned above, and apart from minor changes of notation due to the vectorial character of the equation, Steps 1, 3 and 4 of the proof of Theorem \ref{theo:exi} can  be readily transposed to this setting, and thus we only need to focus on the extension of Step 2.

\smallskip

In fact,  with the same notations as in the one-dimensional proof and considering only those intervals $I = [s_0,t_0]$ satisfying the two conditions in (\ref{eq:I-small}), we have by (\ref{skorohod-bound-general}), (\ref{bound-g-ep}) and (\ref{eq:ysharp-bound}) that for all $s<t \in I$,
  \begin{equation}
    \omega_{m^{\varepsilon}} (s, t) \leqslant \Psi (\omega_{g^{\varepsilon}, \varepsilon}
    (s, t)) \leqslant \Psi (C_{f,p}  (\omega_{\mathbbm{X}} (s, t) +
    \omega_{\mathbbm{X}} (s, t) \omega_{m^{\varepsilon}} (s, t)^p)),
    \label{eq:bound-meas}
  \end{equation}
  where
  \[ \Psi (\lambda) \assign C_1  [e^{p C_2 (1 + \lambda^{1 / p})} \lambda +
     1]^{} (e^{C_2 (1 + \lambda^{1 / p})} + 1) \lambda^{1 / p}  \]
	and $C_{f,p}$ is a fixed constant. Eq.~(\ref{eq:bound-meas}) implies in particular that the control
  $\omega_{m^{\varepsilon}}$ is regular  if $\omega_{\mathbbm{X}}$ is
  regular, which is our case. Let $G_I$ be the function
  \[ G_I (\lambda) \assign \Psi (C_{f,p}  (1 + \omega_{\mathbbm{X}} (I)
     \lambda^p)) . \]
  By choosing $t_0$ near to $s_0$ we can have both (\ref{eq:I-small}) and $G_I (3 G_I (0)) \leqslant 2 G_I
  (0)$, since $\omega_{\mathbbm{X}} (I) \rightarrow 0$ as $t_0 \downarrow s_0$.
  This choice of $t_0$ depends only on $\omega_{\mathbbm{X}}$ and $G_I(0)$ (which is actually independent of $I$). Now
  eq.~(\ref{eq:bound-meas}) implies also that
  \[ \omega_{m^{\varepsilon}} (s_0, t) \leqslant G_I (\omega_{m^{\varepsilon}}
     (s_0, t))\ , \qquad t \in I \ . \]
  We want to establish that $\omega_{m^{\varepsilon}} (I) \leqslant 2 G_I
  (0)$ and to this end we can apply the method of continuity. Let $\mathcal{A} \subseteq I$ be
  the set of $t \in I$ such that the property $\omega_{m^{\varepsilon}} (s_0,
  t) \leqslant 2 G_I (0)$ is true. Note that $[s_0, s_0 + \delta] \subseteq
  \mathcal{A}$ for $\delta$ small enough by the continuity of the control
  $\omega_{m^{\varepsilon}}$. Moreover $\mathcal{A}$ is closed in $I$ since if
  $(t_n)_n \subseteq \mathcal{A}$ is a sequence converging to $t_{\ast}$ then,
  again by regularity of $\omega_{m^{\varepsilon}}$  we have
  $\omega_{m^{\varepsilon}} (s_0, t_{\ast}) = \lim_n \omega_{m^{\varepsilon}}
  (s_0, t_n) \leqslant 2 G_I (0)$. Finally $\mathcal{A}$ is also open in $I$
  since if $t_{\ast} \in \mathcal{A}$ then for $\delta$ small enough
  $\omega_{m^{\varepsilon}} (s_0, t) \leqslant 3 G_I (0)$ for all $t \in
  (t_{\ast} - \delta, t_{\ast} + \delta) \cap I$. But then our
  choice of $I$ guarantee that
  \[ \omega_{m^{\varepsilon}} (s_0, t) \leqslant G_I
     (\omega_{m^{\varepsilon}} (s_0, t)) \leqslant G_I (3 G_I (0)) \leqslant 2
     G_I (0) \ , \qquad t \in (t_{\ast} - \delta, t_{\ast} + \delta)
     \cap I \ , \]
  from which we see that $(t_{\ast} - \delta, t_{\ast} + \delta) \cap I
  \subseteq \mathcal{A}$ and that $\mathcal{A}$ is open in $I$. We can then
  conclude that $\mathcal{A}= I$, namely that $\omega_{m^{\varepsilon}} (I)
  \leqslant 2 G_I (0)$. 
  Now we can reason in this way for any nonempty interval $I_{t, \delta} = (t
  - \delta, t + \delta) \cap [0, T]$ by choosing $\delta = \delta (t) > 0$
  small enough to satisfy our conditions. In this way we construct an open
  covering $\cup_t I_{t, \delta (t)}$ of $[0, T]$ from which we can extract a
  finite covering $(I_k)_k$ independent of $\varepsilon$ and such that
$$
\omega_{m^{\varepsilon}} (I_k) \leqslant 2 G_I (0)
$$
for all $I_k$ in the covering. This bound provides us with the expected substitute for (\ref{eq:measure-bounds}), and we can then follow Steps 3 and 4 of the proof of Theorem \ref{theo:exi} to get the conclusion. 
\end{proof}

\end{document}